\documentclass[11pt,letterpaper]{amsart}
\usepackage{amsfonts, amsthm, amssymb, amsmath, stmaryrd}
\usepackage{mathrsfs,array}
\usepackage{eucal,color,times,enumerate,accents}
\usepackage[all]{xy}
\usepackage{url}
\usepackage{enumitem}
\usepackage[pdftex,bookmarksnumbered,bookmarksopen]{hyperref}

\usepackage[utf8]{inputenc}
\usepackage[T1]{fontenc}

\hypersetup{
  colorlinks,
 citecolor=black,
  linkcolor=black,
  urlcolor=black}

    \newcommand\blfootnote[1]{%
  \begingroup
  \renewcommand\thefootnote{}\footnote{#1}%
  \addtocounter{footnote}{-1}%
  \endgroup
}
  
  \newcommand{\Addresses}{{
  \bigskip
  \footnotesize

\textsc{London School of Geometry and Number Theory, UCL, Department of Mathematics, Gower street, WC1E 6BT, London, UK}\par\nopagebreak
  \textit{E-mail address}, G.~Baldi: \texttt{gregorio.baldi.16@ucl.ac.uk}
  
  \textit{E-mail address}, G.~Grossi: \texttt{giada.grossi.16@ucl.ac.uk}

}}

\usepackage{bbm}

\usepackage{leftidx}

\usepackage[bottom=3.1cm, top=3.1cm, left=2.4cm, right=2.4cm]{geometry}

\input xy
\xyoption{all}

\addtocontents{toc}{\setcounter{tocdepth}{1}}

\usepackage{tikz-cd}
\usepackage{hyperref}
\hypersetup{colorlinks, linkcolor=blue}
\theoremstyle{plain}
\newtheorem{thm}{Theorem}[subsection]
\newtheorem{thmintro}{Theorem}[]
\newtheorem*{corintro}{Corollary}
\newtheorem{conj}{Conjecture}[section]
\newtheorem{lemma}[thm]{Lemma}

\newtheorem{prop}[thm]{Proposition}
\newtheorem{cor}[thm]{Corollary}
\newtheorem{cor/ex}[thm]{Corollary/Example}
\theoremstyle{definition}
\newtheorem{defi}[thm]{Definition}

\newtheorem{rmk}[thm]{Remark}
\newtheorem{question}[thm]{Question}
\theoremstyle{remark}

\numberwithin{equation}{subsection}

\makeatletter
\newenvironment{subtheorem}[1]{%
  \def\subtheoremcounter{#1}%
  \refstepcounter{#1}%
  \protected@edef\theparentnumber{\csname the#1\endcsname}%
  \setcounter{parentnumber}{\value{#1}}%
  \setcounter{#1}{0}%
  \expandafter\def\csname the#1\endcsname{\theparentnumber.\Alph{#1}}%
  \ignorespaces
}{%
  \setcounter{\subtheoremcounter}{\value{parentnumber}}%
  \ignorespacesafterend
}
\makeatother
\newcounter{parentnumber}


\newcommand{\DT}{\mathbb{S}}

\newcommand{\Tw}{\operatorname{Tw}}

\DeclareMathOperator{\Sh}{Sh}

\DeclareMathOperator{\Ext}{Ext}
\DeclareMathOperator{\End}{End}

\DeclareMathOperator{\Pic}{Pic}
\DeclareMathOperator{\Hom}{Hom}
\DeclareMathOperator{\Gal}{Gal}

\DeclareMathOperator{\Res}{Res}

\DeclareMathOperator{\im}{Im}

\DeclareMathOperator{\tr}{tr}

\DeclareMathOperator{\Gl}{GL}

\newcommand{\Gm}{\mathbb{G}_m}



\newcommand{\Z}{\mathbb{Z}}
\newcommand{\Q}{\mathbb{Q}}

\newcommand{\R}{\mathbb{R}}
\newcommand{\Pp}{\mathbb{P}}
\newcommand{\A}{\mathbb{A}}

\newcommand{\Oo}{\mathcal{O}}

\newcommand{\C}{\mathbb{C}}
\newcommand{\N}{\mathfrak{N}}

\newcommand{\Qbar}{\overline{\mathbb{Q}}}
\newcommand{\Ff}{\mathbb{F}}
\newcommand{\fr}{\text{Frob}}

\newcommand{\frw}{\text{Frob}_w}
\newcommand{\hh}{\mathcal{H}}

\newcommand\SmallMatrix[1]{{%
  \tiny\arraycolsep=0.3\arraycolsep\ensuremath{\begin{pmatrix}#1\end{pmatrix}}}}



\newcommand{\Tr}{\mathrm{Tr}}


\setcounter{tocdepth}{2}

\makeatletter
\def\@settitle{\begin{center}%
  \baselineskip14\p@\relax
  \bfseries
  \uppercasenonmath\@title
  \@title
  \ifx\@subtitle\@empty\else
     \\[1ex]\uppercasenonmath\@subtitle
     \footnotesize\mdseries\@subtitle
  \fi
  \end{center}%
}
\def\subtitle#1{\gdef\@subtitle{#1}}
\def\@subtitle{}
\makeatother

\begin{document}

\newcommand{\adjunction}[4]{\xymatrix@1{#1{\ } \ar@<-0.3ex>[r]_{ {\scriptstyle #2}} & {\ } #3 \ar@<-0.3ex>[l]_{ {\scriptstyle #4}}}}

\title{Finite descent obstruction for Hilbert modular varieties}\blfootnote{\emph{Date}. July 9, 2020}\blfootnote{\emph{2010 Mathematics Subject Classification}. 11F41, 11G35, 14G35, 11F80 }\blfootnote{\emph{Key words and phrases}. Hilbert modular varieties, descent obstruction, Galois representations, Serre's conjecture, Abelian varieties of $\Gl_2$-type, totally real fields}
\author{Gregorio Baldi, Giada Grossi}

\begin{abstract}
Let $S$ be a finite set of primes. We prove that a form of finite Galois descent obstruction is the only obstruction to the existence of $\mathbb{Z}_{S}$-points on integral models of Hilbert modular varieties, extending a result of D.Helm and F.Voloch about modular curves. Let $L$ be a totally real field. Under (a special case of) the absolute Hodge conjecture and a weak Serre's conjecture for mod $\ell$ representations of the absolute Galois group of $L$, we prove that the same holds also for the $\mathcal{O}_{L,S}$-points.
\end{abstract}
\maketitle

\tableofcontents

\section{Introduction}
A leading problem in arithmetic geometry is to determine whether an equation with coefficients in a number field $F$ has any solutions. Since there can be no algorithm determining whether a given Diophantine equation is soluble in the integers $\Z$, one usually tries to understand the problem under strong constraints of the geometry of the variety defined by such equation or by assuming the existence of many \emph{local} solutions. In the case of curves, for example, Skorobogatov \cite{MR1845760} asked whether the Brauer-Manin obstruction is the only obstruction to the existence of rational points. The question, or variations thereof, attracted the attention of Bruin, Harari, Helm, Poonen, Stoll and Voloch among others. In particular Helm and Voloch \cite{helmvoloch} studied a form of the finite Galois descent obstruction for the integral points of modular curves. The goal of our paper is to present a class of arbitrarily large dimensional varieties that can be treated similarly to curves. More precisely, we give sufficient conditions for the existence of $\Oo_{L,S}$-integral points on (twists of) Hilbert modular varieties associated to $K$, where both $L$ and $K$ are totally real fields. 

\subsection{What is a point of a Shimura variety?}
A point of a Shimura variety attached to a Shimura datum $(G,X)$ corresponds to a Hodge structure (once a faithful linear representation of the group $G$ is fixed). Of course not every Hodge structure can arise in this way. Even when the Shimura variety parametrises motives, there is no description of the Hodge structures coming from  geometry, nor a conjecture predicting this. However Shimura varieties have canonical models over number fields. Hence we can associate to an algebraic point a Galois representation and, conjecturally at least, we can predict which $\ell$-adic Galois representations come from geometry. This is the content of the Fontaine-Mazur conjecture \cite{MR1363495}. Our study of rational and integral points of Hilbert modular varieties begins with an attempt to understand when a suitable system of Galois representations comes from an abelian variety with $\Oo_K$-multiplication. See Section \ref{question} for a more precise formulation of the question. 

Our strategy arises from predictions of the Langlands' programme, which link the worlds of
\begin{center}
Automorphic forms $\leftrightarrow$ Motives $\leftrightarrow$ Galois representations.
\end{center}
We refer to \cite{MR3642468} for an introduction to this circle of ideas. More precisely, from a system of Galois representations that ``looks like'' the one coming from an abelian variety with $\Oo_K$-multiplication, we want to produce, via Serre's modularity conjecture, a Hilbert modular form over $L$ with Fourier coefficients in $K$. Eichler--Shimura theory attachs to this modualr form an abelian variety over $L$ with $\Oo_K$-multiplication, which will correspond to an $L$-point on the Hilbert modular variety for $K$. If $L=\Q$, Serre's conjecture is known to hold true by the work of Khare and Wintenberger \cite{MR2480604} and the Eichler--Shimura theory has been worked out by Shimura \cite{shimura}. If $L\neq \Q$, to make such a strategy work, we need to assume Serre's conjecture for the totally real field $L$ and also (a special case of) the absolute Hodge conjecture, where the latter is required by Blasius in \cite{MR2058605} in order to attach abelian varieties to Hilbert modular forms. In the next section we present in more details the main results of the paper.
\subsection{Main results}\label{mainresults}
Let $L,K$ be totally real extensions of $\Q$ and set
\begin{displaymath}
n_L:= [L:\Q], \ \ \ \ \ n_K:= [K:\Q].
\end{displaymath}
We denote by $w$ a place of $L$ and by $v$ a place of $K$. In what follows, one should think of $L$ as the \emph{field of definition} and $K$ as the \emph{Hecke field}. We denote by $\Oo_L$ and $\Oo_K$ the rings of integers of $L$ and $K$, by $L_w$ (respectively $K_v$) the completion of $L$ at $w$ (respectively of $K$ at $v$) and by $\Oo_{L_w}$ (respectively $\Oo_{K_v}$) the ring of integers of $L_w$ (respectively $K_v$). Finally $G_L$ denotes the absolute Galois group of $L$.

Let $S$ be a finite set of places of $L$ (including all archimedean places), and consider a system of Galois representations
\begin{equation}\label{conditions}
\tag{$\mathcal{S}$}
\rho_v : G_L \to \Gl_2(K_v)
\end{equation}
for every finite place $v$ of $K$, such that:
\begin{enumerate}
\item[($\mathcal{S}.1$)] $\{\rho_v\}_v$ is a weakly compatible system of Galois representations (see Definition \ref{weakcomp});
\item[($\mathcal{S}.2$)] $\det(\rho_v)=\chi_{\ell}$, where $\chi_{\ell}$ is the $\ell$-adic cyclotomic character and $v\mid \ell$;
\item[($\mathcal{S}.3$)] The residual representation $\bar{\rho}_v$ is finite flat at $w\mid\ell$, for all $v\mid \ell$ such that $\bar{\rho}_v$ is irreducible and $\ell$ is not divisible by any prime in $S$;
\item[($\mathcal{S}.4$)] $\bar{\rho}_v$ is absolutely irreducible for all but finitely many $v$;
\item[($\mathcal{S}.5$)] The field generated by the trace of $\rho_v(\fr_w)$ for every $w$ is $K$.
\end{enumerate}

To make it clear which case is conjectural and which is not, we separate the statement of our first theorem into two cases depending on whether $n_L=1$ or $n_L>1$.
\begin{subtheorem}{thmintro}\label{main}\begin{thmintro}\label{main1}
If $L=\Q$, there exists an $n_K$-dimensional abelian variety $A/\Q$ with $\Oo_K$-multiplication, such that, for every $v$, the $v$-adic Tate module of $A$, denoted by $T_v A$, is isomorphic to $\rho_v$ as representation of $G_\Q$.
\end{thmintro}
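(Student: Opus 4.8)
The plan is to produce, from the compatible system $\{\rho_v\}_v$, a Hilbert modular form over $\QQ$ (i.e.\ a classical modular newform) whose system of $\ell$-adic representations matches $\{\rho_v\}_v$, and then invoke Eichler--Shimura theory to realise it geometrically by an abelian variety with $\OO_K$-multiplication. First I would pick an auxiliary finite place $v_0$ of $K$, lying over a prime $\ell_0 \notin S$, such that the residual representation $\bar\rho_{v_0}$ is absolutely irreducible; this is possible by ($\mathcal{S}.4$). By ($\mathcal{S}.2$) we have $\det\bar\rho_{v_0}=\bar\chi_{\ell_0}$, in particular $\bar\rho_{v_0}$ is odd (the determinant of complex conjugation is $-1$). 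By ($\mathcal{S}.3$), $\bar\rho_{v_0}$ is finite flat at $\ell_0$. Thus $\bar\rho_{v_0}\colon G_\QQ\to \Gl_2(\FF_{v_0})$ is an odd, absolutely irreducible representation which is finite flat at $\ell_0$ and unramified outside a finite set of primes. By Serre's conjecture, now a theorem of Khare--Wintenberger \cite{MR2480604}, $\bar\rho_{v_0}$ is modular, coming from a classical eigenform $f_0$ of weight $2$ (the weight and level being forced by the local conditions: finite flat at $\ell_0$ gives weight $2$ there, and the determinant being the cyclotomic character forces trivial nebentypus and weight $2$ globally).

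Next I would upgrade this residual modularity to modularity of the whole system. The newform $f_0$ has a Hecke eigenvalue system; completing at a place $v_0'$ of its Hecke field above $\ell_0$ gives an $\ell_0$-adic representation $\rho_{f_0,v_0'}$ reducing to $\bar\rho_{v_0}$. One then wants to identify $\rho_{v_0}$ with (a suitable specialisation of) $\rho_{f_0}$. The tool here is an $R=\TT$ modularity lifting theorem in the style of Taylor--Wiles and its refinements: since $\bar\rho_{v_0}$ is modular, absolutely irreducible, odd, and $\rho_{v_0}$ is a finite-flat (hence de Rham of the right Hodge--Tate weights, namely $\{0,1\}$ by ($\mathcal{S}.2$) and ($\mathcal{S}.3$)) lift with cyclotomic determinant, modularity lifting shows $\rho_{v_0}$ is itself modular. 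So $\rho_{v_0}\cong \rho_{g,v_0''}$ for some newform $g$ of weight $2$ and trivial nebentypus. Because $\{\rho_v\}_v$ is a weakly compatible system (($\mathcal{S}.1$)), the Frobenius traces of $\rho_{v_0}$ agree with those of $g$ at almost all primes, hence by strong multiplicity one the entire system $\{\rho_v\}_v$ is weakly compatible with the system attached to $g$; in fact one gets $\tr\rho_v(\fr_p)=a_p(g)$ for almost all $p$, for every $v$. Condition ($\mathcal{S}.5$) then identifies $K$ with the Hecke field $\QQ(\{a_p(g)\})$ of $g$ (the traces generate $K$ on the one side and the Hecke field on the other, and they coincide).

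Finally I would apply Eichler--Shimura: to the weight-$2$ newform $g$ with Hecke field $K$ (of degree $n_K$) Shimura \cite{shimura} attaches an abelian variety $A_g/\QQ$ of dimension $n_K$ equipped with an embedding $\OO_K\hookrightarrow\End_\QQ(A_g)$, quotient of the Jacobian of the relevant modular curve, such that for every finite place $v$ of $K$ the $v$-adic Tate module $T_vA_g$ is isomorphic as $G_\QQ$-representation to the $v$-adic representation attached to $g$. Chaining this with the identification of the previous paragraph gives $T_vA_g\cong \rho_v$ as $G_\QQ$-representations for every $v$, as desired. The main obstacle is the middle step: deducing modularity of the $\ell_0$-adic representation $\rho_{v_0}$ (not merely of its reduction) from residual modularity. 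This requires knowing that $\rho_{v_0}$ is geometric/de Rham with the correct Hodge--Tate weights at $\ell_0$ — which is exactly what hypothesis ($\mathcal{S}.3$) (finite flatness of $\bar\rho_{v_0}$, together with the cyclotomic determinant) is designed to supply locally — and then a careful invocation of a modularity lifting theorem valid in the residually irreducible case with these precise local conditions; one must also check the compatibility at the auxiliary prime $\ell_0$ and at primes in $S$ is consistent with the level being prime-to-$S$, i.e.\ that no spurious ramification is introduced. A small additional point is ensuring the eigenform $g$ actually has weight $2$ and trivial nebentypus rather than a twist; this is controlled by ($\mathcal{S}.2$).
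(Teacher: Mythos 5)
The pivot of your argument --- upgrading residual modularity of $\bar{\rho}_{v_0}$ to modularity of the $\ell_0$-adic representation $\rho_{v_0}$ by a Taylor--Wiles lifting theorem --- does not go through with the hypotheses as stated, and this is a genuine gap. Condition $(\mathcal{S}.3)$ is a condition on the \emph{residual} representation only: it says $\bar{\rho}_{v_0}$ is finite flat at $\ell_0$. It says nothing about $\rho_{v_0}$ restricted to a decomposition group at $\ell_0$ being Barsotti--Tate, crystalline, or even de Rham, and a finite flat residual representation admits many non-geometric lifts. Every modularity lifting theorem you could invoke requires a $p$-adic Hodge-theoretic condition on the lift itself; the hypotheses $(\mathcal{S}.1)$--$(\mathcal{S}.5)$ are deliberately weaker than ``geometric in the sense of Fontaine--Mazur.'' So your parenthetical ``finite flat (hence de Rham of the right Hodge--Tate weights \dots by $(\mathcal{S}.3)$)'' attributes to $\rho_{v_0}$ a property that only its reduction is assumed to have, and the middle step of your plan fails as a proof of the theorem as stated.

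The paper's proof is engineered precisely to avoid this. It never lifts: it applies Khare--Wintenberger to $\bar{\rho}_v$ for \emph{infinitely many} $v$, uses Serre's conductor bound (Proposition \ref{conductorprop}) together with the weight and level refinements (Theorem \ref{thmweightcond}) to show that all the resulting eigenforms have parallel weight $2$ and level dividing a fixed ideal $\mathfrak{C}$, and then, since the space of such forms is finite dimensional, finds by pigeonhole a single eigenform $f$ with $a_w(f)\equiv a_w$ modulo infinitely many primes, whence $a_w(f)=a_w$ exactly (Proposition \ref{propmodform}). Only then does Eichler--Shimura (Theorem \ref{eichlershimurathm}) enter, exactly as in your last step, and your use of $(\mathcal{S}.5)$ to identify $K$ with the Hecke field matches the paper. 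If you want to keep a single-prime lifting argument you would have to strengthen the hypotheses so that $\rho_{v_0}$ itself is Barsotti--Tate at $\ell_0$ (and verify the Taylor--Wiles condition for $\bar{\rho}_{v_0}$); otherwise the multi-prime pigeonhole argument is the mechanism that extracts exact, rather than merely residual, information from purely residual hypotheses.
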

\begin{thmintro}\label{main2}
Assume $n_L>1$. Under the validity of the absolute Hodge--conjecture (more precisely Conjecture \ref{absolutehodge}) and a suitable generalisation of Serre's conjecture (Conjecture \ref{ourserreconj}), there exists an $n_K$-dimensional abelian variety $A/L$ with $\Oo_K$-multiplication, such that, for every $v$, $T_v A$ is isomorphic to $\rho_v$ as representation of $G_L$.
\end{thmintro}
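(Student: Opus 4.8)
The plan is to prove that the system $\{\rho_v\}_v$ is \emph{modular} --- that it comes from a Hilbert modular newform $f$ over $L$ of parallel weight $2$, trivial nebentypus and Hecke field $K$, in the sense that $\rho_v\cong\rho_{f,v}$ for every $v$ --- and then to invoke an Eichler--Shimura-type construction. Granting modularity: for $L=\Q$, Shimura's construction \cite{shimura} attaches to $f$ an abelian variety $A_f/\Q$ of dimension $[K:\Q]=n_K$ carrying an action of an order of $K$, with $T_vA_f\cong\rho_{f,v}$; for $n_L>1$, the construction of Blasius \cite{MR2058605} does the same over $L$, and this is the unique place where the absolute Hodge conjecture (Conjecture \ref{absolutehodge}) is used. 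Replacing $A_f$ by the isogenous abelian variety produced by Serre's tensor construction upgrades the order to the maximal order $\Oo_K$ without changing the rational Tate modules. Thus both parts of Theorem \ref{main} reduce to modularity of $\{\rho_v\}_v$, with Theorem \ref{main1} the special case where this is unconditional.

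\textbf{Modularity via Serre and lifting.} I would first choose, using that $S$ and the ramification locus of the system are finite together with $(\mathcal{S}.4)$, a finite place $v_0\mid\ell_0$ of $K$ with: $\ell_0$ odd, unramified in $L$, below no place of $S$ and no ramified place of the system, large (in the Fontaine--Laffaille range), $\overline{\rho}_{v_0}$ absolutely irreducible, and $\overline{\rho}_{v_0}$ remaining irreducible on restriction to the absolute Galois group of $L(\zeta_{\ell_0})$ --- this last ``big image'' property holding for all but finitely many $v$ because $\det\overline{\rho}_{v_0}$ is the mod $\ell_0$ cyclotomic character. By $(\mathcal{S}.2)$ this $\overline{\rho}_{v_0}$ is totally odd, by $(\mathcal{S}.3)$ it is finite flat above $\ell_0$, and the local shape of $\rho_{v_0}$ there (forced by $(\mathcal{S}.2)$ and weak compatibility, Definition \ref{weakcomp}) is Barsotti--Tate with Hodge--Tate weights $\{0,1\}$. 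Now Conjecture \ref{ourserreconj} makes $\overline{\rho}_{v_0}$ modular, $\overline{\rho}_{v_0}\cong\overline{\rho}_g$ for a Hilbert modular eigenform $g$ over $L$; feeding this residual modularity into a Taylor--Wiles--Kisin modularity lifting theorem in the Barsotti--Tate case --- whose hypotheses are exactly the big-image and local-at-$\ell_0$ conditions just arranged --- promotes $\rho_{v_0}$ itself to modular: $\rho_{v_0}\cong\rho_{f,v_0}$ for a newform $f$ over $L$ of parallel weight $2$ and trivial nebentypus (trivial again by $(\mathcal{S}.2)$). When $L=\Q$ this whole step, including the choice of $\ell_0$, is Khare--Wintenberger \cite{MR2480604} together with the available weight-$2$ lifting theorems.

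\textbf{Propagation through the system.} Comparing Frobenius traces gives $a_w(f)=\Tr\rho_{v_0}(\fr_w)$ for almost all $w$; by $(\mathcal{S}.5)$ these generate $K$, so $f$ has Hecke field $K$ and $A_f$ has dimension $n_K$. Since $\{\rho_v\}_v$ and the system $\{\rho_{f,v}\}_v$ attached to $f$ are both weakly compatible and agree at $v_0$, their $\fr_w$-characteristic polynomials agree for $w$ in a set of density one; by Chebotarev and Brauer--Nesbitt, $\rho_v$ and $\rho_{f,v}$ have isomorphic semisimplifications for every $v$, and $\rho_v$ is irreducible --- hence semisimple --- for the cofinitely many $v$ with $\overline{\rho}_v$ irreducible. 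Thus $\rho_v\cong\rho_{f,v}$ for every $v$ (for the finitely many possibly-reducible $v$ one uses semisimplicity of $\rho_v$, Tate modules being semisimple by Faltings). Together with the construction recalled in the first paragraph, this produces an abelian variety $A$ over $L$ of dimension $n_K$ with $\Oo_K$-multiplication and $T_vA\cong\rho_v$ for every $v$, proving Theorem \ref{main2} (and, unconditionally, Theorem \ref{main1}).

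\textbf{Main obstacle.} I expect the genuinely load-bearing point to be the junction of the first two steps: exhibiting a \emph{single} auxiliary prime $\ell_0$ at which the generalised Serre conjecture and an \emph{available} modularity lifting theorem apply simultaneously --- reconciling the big-image and Fontaine--Laffaille constraints with the coprimality constraints coming from $S$ and from the ramification of $\{\rho_v\}_v$, and checking that $\rho_{v_0}$ really does have Barsotti--Tate reduction at $\ell_0$. The subtler bookkeeping (matching the entire compatible system rather than one member, and semisimplicity at the finitely many reducible places) is routine once this is in place. Blasius's construction of $A_f$ is conceptually the heaviest ingredient, but since it is precisely what Conjecture \ref{absolutehodge} is assumed to provide, it poses no obstacle to the argument itself.
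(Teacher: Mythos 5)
Your overall architecture (Serre's conjecture gives modularity; Blasius/Eichler--Shimura then gives the abelian variety; weak compatibility propagates the isomorphism to all $v$) matches the paper's, but the middle step is where you diverge, and that is where there is a genuine gap. You propose to establish modularity of the single member $\rho_{v_0}$ by combining residual modularity (from Conjecture \ref{ourserreconj}) with a Taylor--Wiles--Kisin lifting theorem in the Barsotti--Tate case. Such a lifting theorem requires a $p$-adic Hodge-theoretic hypothesis on the \emph{characteristic-zero} representation $\rho_{v_0}$ at the places $w\mid\ell_0$ (crystalline with Hodge--Tate weights $\{0,1\}$, or at least potentially Barsotti--Tate). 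Nothing in the hypotheses supplies this: condition $(\mathcal{S}.3)$ only constrains the \emph{residual} representation $\bar{\rho}_{v_0}$ to be finite flat, and weak compatibility as defined in Definition \ref{weakcomp} is purely a statement about unramifiedness outside $S_v$ and about Frobenius characteristic polynomials --- it imposes no de Rham-type condition at $v\mid\ell$. Your assertion that the Barsotti--Tate shape of $\rho_{v_0}$ above $\ell_0$ is ``forced by $(\mathcal{S}.2)$ and weak compatibility'' is therefore false; having cyclotomic determinant does not make a $2$-dimensional $\ell_0$-adic representation geometric at $\ell_0$. Since you correctly identify this as the load-bearing junction of your argument, the argument as written does not go through. (Trying to salvage it by working only residually at the single prime $v_0$ gives merely a congruence $a_w\equiv a_w(g)\bmod\lambda_0$, which is not enough to pin down the Fourier coefficients.)

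The paper avoids lifting theorems entirely, and this is precisely why its hypotheses are formulated residually. In Proposition \ref{propmodform} it applies Conjecture \ref{ourserreconj} to $\bar{\rho}_v$ for \emph{infinitely many} $v$, uses Serre's conductor bound (Proposition \ref{conductorprop}) to show all the resulting eigenforms have level dividing a fixed ideal $\mathfrak{C}$ and, via Theorem \ref{thmweightcond}, parallel weight $2$; finite-dimensionality of the space of Hilbert modular forms of bounded weight and level then lets one extract by pigeonhole a single eigenform $f$ with $a_w(f)\equiv a_w$ modulo infinitely many primes, whence $a_w(f)=a_w$ exactly. Only residual information about the system is ever used. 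If you replace your ``single auxiliary prime plus $R=\mathbb{T}$'' step with this multiplicity-of-primes argument, the rest of your write-up (the role of $(\mathcal{S}.5)$ in forcing $K(f)=K$, Blasius's theorem as the unique consumer of Conjecture \ref{absolutehodge}, and the Chebotarev/Brauer--Nesbitt propagation) aligns with the paper; your remark on semisimplicity at the finitely many $v$ with $\bar{\rho}_v$ reducible is a point the paper itself leaves implicit.
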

\end{subtheorem}


We apply the above to study the finite descent obstruction, as explored in \cite{MR1845760, MR2368954, MR2726726}, of Hilbert modular varieties. A recap is given in section \ref{recapobs}. More precisely, denote by $Y_K$ the Hilbert modular variety associated to $K$. Let $\mathfrak{N}$ be an ideal in $\Oo_K$ and denote by $Y_K(\mathfrak{N})$ the moduli space of $n_K$-dimensional abelian varieties, principally $\Oo_K$-polarized and with $\mathfrak{N}$-level structure (see section \ref{modulispace} for a precise definition). As a corollary of the above theorems, we prove that the finite Galois descent obstruction (as defined in section \ref{finitesection}) is the only obstruction to the existence of $S$-integral points on integral models of twists of Hilbert modular varieties, denoted by $\mathcal{Y}_K(\mathfrak{N})$, over the ring of $S$-integers of a totally real field $L$, generalising \cite[Theorem 3]{helmvoloch}. Assume that $S$ contains the places of bad reduction of $Y_K(\mathfrak{N})$. The set $\mathcal{Y}_\rho^{f-cov} (\Oo_{L,S})$ is defined in section \ref{recapobs}. We prove the following.
\begin{thmintro}\label{maincor}
If $n_L>1$, assume the conjectures of Theorem \ref{main2}. Let $\mathcal{Y}_\rho$ be the $S$-integral model of a twist of $\mathcal{Y}_K(\mathfrak{N})$, corresponding to a representation $\rho: G_L \to \Gl_2(\Oo_K/\mathfrak{N})$. If $\mathcal{Y}_\rho^{f-cov} (\Oo_{L,S})$ is non-empty then $\mathcal{Y}_\rho(\Oo_{L,S})$ is non-empty.
\end{thmintro}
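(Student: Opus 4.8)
The plan is to convert an $S$-integral point of the finite-descent set of $\mathcal{Y}_\rho$ into the Galois-theoretic input of Theorem~\ref{main}, and then convert the abelian variety produced by Theorem~\ref{main} back into an $\Oo_{L,S}$-point of $\mathcal{Y}_\rho$. So suppose we are given a point in $\mathcal{Y}_\rho^{f-cov}(\Oo_{L,S})$, represented by an adelic collection $(x_w)_w$ which is integral away from $S$. Since $S$ contains the places of bad reduction of $Y_K(\mathfrak{N})$, the model $\mathcal{Y}_\rho$ is smooth over $\Oo_{L,S}$, and for every prime $\mathfrak{M}$ of $\Oo_K$ prime to $S$ and every $n$ the (appropriately twisted) congruence cover $Y_K(\mathfrak{N}\mathfrak{M}^{n})\to Y_K(\mathfrak{N})$ extends to a finite étale cover of $\mathcal{Y}_\rho$. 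By the defining property of the finite-descent set, $(x_w)$ lifts through every such cover, and one can choose the lifts compatibly in $n$ and $\mathfrak{M}$; in the limit they produce, for each finite place $v$ of $K$, a continuous representation $\rho_v\colon G_L\to\Gl_2(K_v)$ whose reduction modulo $\mathfrak{N}$ is $\rho$ (this being exactly the twisting datum), and whose restriction to each $G_{L_w}$ with $w\notin S$ is recorded by the good-reduction fibre in which $x_w$ lies.

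Next I would verify that $\{\rho_v\}_v$ satisfies $(\mathcal{S}.1)$–$(\mathcal{S}.3)$. For $w\notin S$ the integral point $x_w$ gives an abelian scheme over $\Oo_{L_w}$ whose special fibre is an abelian variety over the residue field; its Frobenius, in particular the characteristic polynomial of $\fr_w$ acting through $\rho_v$, is independent of $v$, which is weak compatibility $(\mathcal{S}.1)$. The universal family carries a principal $\Oo_K$-polarization, so the Weil pairing identifies $\det\rho_v$ with $K_v(1)$, giving $\det\rho_v=\chi_\ell$, which is $(\mathcal{S}.2)$. Finally, at $w\mid\ell$ with $\ell$ prime to $S$ the point $x_w$ again lies in a good-reduction fibre, so $\bar\rho_v|_{G_{L_w}}$ is the $\ell$-torsion of an abelian scheme and hence finite flat, which is $(\mathcal{S}.3)$.

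The main obstacle is $(\mathcal{S}.4)$ and $(\mathcal{S}.5)$: a priori the system just constructed need not be residually absolutely irreducible for almost all $v$, nor have trace field equal to all of $K$, and this is precisely the analogue of the exceptional-case analysis carried out for modular curves in \cite{helmvoloch}. If the trace field is a proper subfield $K'\subsetneq K$, one expects $\{\rho_v\}$ to descend to a weakly compatible system over $K'$ with trace field $K'$, to which Theorem~\ref{main} applies over $K'$; Serre's tensor construction $-\otimes_{\Oo_{K'}}\Oo_K$ then promotes the resulting abelian variety to one with $\Oo_K$-multiplication and the correct Tate modules. If instead $(\mathcal{S}.4)$ fails, the system should be of CM type, the corresponding point of $\mathcal{Y}_\rho$ a special point, and one must produce the abelian variety directly from CM theory and check — using the integrality of $(x_w)$ — that it defines an $\Oo_{L,S}$-point of the given twist. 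Showing that these two reductions are exhaustive, and carrying out the CM case, is where I expect the real work to lie; it is the Hilbert-modular counterpart of the delicate points in the modular-curve argument.

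Once $(\mathcal{S}.1)$–$(\mathcal{S}.5)$ hold, the conclusion is largely formal. Theorem~\ref{main} — its unconditional form \ref{main1} when $n_L=1$, and \ref{main2} under Conjectures~\ref{absolutehodge} and~\ref{ourserreconj} when $n_L>1$ — furnishes an $n_K$-dimensional abelian variety $A/L$ with $\Oo_K$-multiplication and $T_vA\cong\rho_v$ for all $v$. The compatible perfect alternating $\Oo_K$-linear pairings on the $T_vA$ supplied by $(\mathcal{S}.2)$ identify the polarization module of $A$ with $\Oo_K$, so $A$ carries a principal $\Oo_K$-polarization; reducing $T_vA\cong\rho_v$ modulo $\mathfrak{N}$ gives an isomorphism $A[\mathfrak{N}]\cong\rho$ of $G_L$-modules with $\Oo_K/\mathfrak{N}$-action, i.e. a level-$\mathfrak{N}$ structure of the type defining the twist $\mathcal{Y}_\rho$; and since the integral point $x_w$ forces $\rho_v$ to be unramified at every $w\notin S$ with $w\nmid v$, Néron–Ogg–Shafarevich shows $A$ has good reduction outside $S$ and hence extends to an abelian scheme over $\Oo_{L,S}$. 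This abelian scheme, with its principal $\Oo_K$-polarization and $\rho$-twisted $\mathfrak{N}$-level structure, is a point of $\mathcal{Y}_\rho(\Oo_{L,S})$, which is therefore non-empty.
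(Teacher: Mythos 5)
Your overall architecture is the same as the paper's: extract a compatible system $\{\rho_v\}_v$ from the unobstructed adelic point via the tower of level covers, verify the conditions of (\ref{conditions}), feed the system into Theorems \ref{main1}/\ref{main2} (in the form of Corollary \ref{corfordescent}), and convert the resulting abelian variety back into an $\Oo_{L,S}$-point. Your treatment of a proper trace field $E\subsetneq K$ by Serre's tensor construction is essentially the paper's argument in different clothing (the paper phrases it as pushing the point forward along the morphism $S_E\to Y_K(\mathfrak{N})$ induced by $\Res_{E/\Q}\Gl_2\hookrightarrow\Res_{K/\Q}\Gl_2$), and your verification of $(\mathcal{S}.1)$--$(\mathcal{S}.3)$ and of the final bookkeeping (polarisation, level structure, N\'eron--Ogg--Shafarevich) matches Lemma \ref{corex2} and section \ref{polar}.

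The genuine gap is your handling of $(\mathcal{S}.4)$. You propose a dichotomy --- either residual absolute irreducibility holds for almost all $v$, or ``the system should be of CM type'' and one argues via CM theory --- and you explicitly leave both the exhaustiveness of this dichotomy and the CM case unproved. Neither is needed: the paper shows in Lemma \ref{corex2} that $(\mathcal{S}.4)$ \emph{always} holds for a system arising from a point of $\mathcal{Y}_\rho^{f-cov}(\Oo_{L,S})$, precisely because of the local good-reduction data you already have in hand. The argument is: if $\bar{\rho}_v$ were absolutely reducible for infinitely many $v$, its semisimplification would be $\phi\oplus\chi_\ell\phi^{-1}$ with $\phi$ of conductor dividing the fixed ideal $\mathfrak{C}$ produced by Proposition \ref{conductorprop}, and $\phi$ unramified at $w\mid\ell$ by ordinarity of the good-reduction abelian variety $A_w$; choosing $w\equiv 1\bmod \mathfrak{C}$ forces $a_w\equiv\chi_\ell(\operatorname{Frob}_w)+1\bmod v$ for infinitely many $v$, hence $a_w=p^{[L_w:\Q_p]}+1$, which violates the Weil bound $|a_w|\leq 2\sqrt{p^{[L_w:\Q_p]}}$ satisfied by $A_w/L_w$. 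Without this step (or some substitute for it) your proof is incomplete: you cannot invoke Proposition \ref{propmodform}, whose hypothesis includes $(\mathcal{S}.4)$, and the CM route you sketch is not carried out. Everything after this point in your write-up is fine.
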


In the work of Helm--Voloch, $\mathcal{Y}$ is the integral model of an affine curve. In the case of curves there are also other tools to establish (variants of) such results, without invoking Serre's conjecture. Indeed, as noticed after the proof of \cite[Theorem 3]{helmvoloch}, Stoll \cite[Corollary 8.8]{MR2368954} proved a similar result, under some extra assumptions, knowing that a factor of the Jacobian of such modular curves has finite Mordell-Weil and Tate-Shafarevich groups. The goal of this paper is to push Helm--Voloch's strategy to a particular class of varieties of arbitrarily large dimension and whose associated Albanese variety is trivial (see Theorem \ref{matsu}), thereby showing that the method could also be applied to study $L$-points.

Another reason for studying rational points of Hilbert modular varieties is the following. By \cite[Theorem 1]{MR1284798}, every smooth projective geometrically connected curve $C/\Qbar$ of genus at least two admits a non-constant $\Qbar$-morphism to either a Hilbert or a Quaternionic modular variety for $K$, where $K$ is a totally real number field depending on $C$ and the choice of a Belyi function $\beta : C\to \Pp^1$. See \cite[Remark 2]{MR1284798} for a detailed description of the ambiguities of such construction. Inspired by \cite[Theorem 5.2]{MR3544295}, where the role of the \emph{Belyi embedding} is played by the Kodaira-Parshin construction \cite{MR801927}, we have the following corollary of Theorem \ref{maincor}. For simplicity we consider rational points of projective curves, even if our main theorems are about integral points.

\begin{corintro}
Let $C/\Q$ be a smooth projective curve of genus $g \geq 2$. Assume the following:
\begin{enumerate}
\item $C(\A_\Q)^{\text{f-cov}}\neq \emptyset$;
\item There exist two totally real number fields $L,K$ and a non-constant $L$-morphism
\begin{displaymath}
f: C_L:=C\times_\Q L \to Y_K(\mathfrak{N})
\end{displaymath}
where $Y_K(\mathfrak{N})$ denotes, as above, the Hilbert modular variety for $K$ of some level $\mathfrak{N}$;
\item If $L\neq \Q$, the conjectures of Theorem \ref{main2} hold true.
\end{enumerate}
Then $C(\Q)\neq \emptyset$.
\end{corintro}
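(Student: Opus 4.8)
The plan is to reduce the statement to Theorem~\ref{maincor} by pushing the adelic point forward along $f$, and then to descend the resulting point of the Hilbert modular variety to a rational point of $C$; this follows the template of \cite[Theorem 5.2]{MR3544295}, with Theorem~\ref{maincor} in place of their input on moduli of abelian varieties and the morphism $f$ in place of the Kodaira--Parshin construction. To keep all constructions over $\Q$ I would work with the Weil restriction $R := \Res_{L/\Q}\big( (Y_K(\mathfrak{N}))_L \big)$, so that $R(\Q) = Y_K(\mathfrak{N})(L)$ and $R(\A_\Q) = Y_K(\mathfrak{N})(\A_L)$, together with the $\Q$-morphism $g : C \to R$ obtained by composing the canonical map $C \to \Res_{L/\Q}(C_L)$ with $\Res_{L/\Q}(f)$. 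Since $f$ is non-constant so is $g$, and hence every fibre of $g$ is a non-empty-or-empty finite $\Q$-scheme.

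\emph{Transferring the point.} Starting from $(P_v) \in C(\A_\Q)^{\text{f-cov}}$, functoriality of the finite descent obstruction under $g$ and under Weil restriction (cf. Section~\ref{recapobs} and \cite{MR2368954}) yields $g\big((P_v)\big) \in R(\A_\Q)^{\text{f-cov}}$, which unwinds to an adelic point $\tilde Q \in Y_K(\mathfrak{N})(\A_L)^{\text{f-cov}}$ equal, by construction, to $f\big((\tilde P_w)\big)$ for $(\tilde P_w)$ the base change of $(P_v)$ to $L$. Because $\tilde Q$ survives the finite étale level covers $Y_K(\mathfrak{N}\mathfrak{m}) \to Y_K(\mathfrak{N})$, it produces for every finite place $v$ of $K$ a continuous representation $\rho_v : G_L \to \Gl_2(\Oo_{K_v})$ — the monodromy of the corresponding twisted tower, via the correspondence between twists of the $\Gl_2(\Oo_{K_v})$-torsor $\varprojlim_n Y_K(\mathfrak{N}\ell^n) \to Y_K(\mathfrak{N})$ and Galois representations — together with its mod-$\mathfrak{N}$ reduction $\rho : G_L \to \Gl_2(\Oo_K/\mathfrak{N})$, so that $\tilde Q$ refines to an element of $\mathcal{Y}_\rho^{\text{f-cov}}(\Oo_{L,S})$ with $S$ the finite set consisting of the archimedean places, the places of bad reduction of $Y_K(\mathfrak{N})$, and the finitely many places at which $\tilde Q$ fails to extend integrally. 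One then checks $(\mathcal{S}.1)$--$(\mathcal{S}.5)$ for $\{\rho_v\}$: weak compatibility (Definition~\ref{weakcomp}) and the local conditions at $w \notin S$ follow from the good reduction of the local points $\tilde P_w$ there, $\det(\rho_v) = \chi_\ell$ from the principal polarisation, and $(\mathcal{S}.4)$, $(\mathcal{S}.5)$ from the non-constancy of $f$ (if these last two fail, the image of $f$ lies in a proper special subvariety and one replaces $K$ by the relevant totally real subfield).

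\emph{Producing and descending the point.} Applying Theorem~\ref{maincor} — invoking the conjectures of Theorem~\ref{main2} when $n_L>1$ — gives $\mathcal{Y}_\rho(\Oo_{L,S})\neq\emptyset$: an abelian variety $A/L$ with $\Oo_K$-multiplication, principal $\Oo_K$-polarisation and level-$\mathfrak{N}$ structure of type $\rho$, hence a point $x_A \in Y_K(\mathfrak{N})(L) = R(\Q)$. By Theorem~\ref{main} the $v$-adic Tate modules of $A$ are the $\rho_v$, so at every place $w$ the abelian varieties $A_{L_w}$ and the universal one at $f(\tilde P_w)\in Y_K(\mathfrak{N})(L_w)$ have isomorphic $v$-adic Tate modules for all $v$; by Faltings's isogeny theorem over $L_w$ together with the moduli interpretation of the polarisation and level structure they define the same point of $Y_K(\mathfrak{N})(L_w)$, whence $x_A = \tilde Q$ as an adelic point and $r_0 := x_A \in R(\Q)$ satisfies $r_0 = g\big((P_v)\big)$. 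The fibre $Z := g^{-1}(r_0)\subseteq C$ is then a non-empty finite $\Q$-scheme containing the $\Q_v$-point $P_v$ for every $v$, and to produce a $\Q$-point of $Z$ I would use that $(P_v)$ survives the covers of $C$ pulled back from finite étale covers of $R$: each such cover restricts over $r_0$ to a torsor over $\Spec\Q$ under a finite group, and a finite-group torsor over $\Spec\Q$ that is locally trivial at every place is trivial by Chebotarev. Running this over a cofinal family of covers forces $Z(\Q)\neq\emptyset$, and any element of $Z(\Q)$ is a rational point of $C$.

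The main difficulty I expect lies in the first two steps: arranging that the transported adelic point lands in the finite descent set of a \emph{single} twist $\mathcal{Y}_\rho$ over a \emph{controlled} ring $\Oo_{L,S}$ (which requires the König-type compactness of the Selmer-style sets of admissible twists together with the material of Section~\ref{recapobs}), and then showing that the point produced by Theorem~\ref{maincor}, which a priori merely exists, is the one carried by $(P_v)$ — this is where Faltings's theorem and the precise moduli description of $\mathcal{Y}_\rho$ are indispensable. Once these are in place the descent in the last step is formal, via the local--global principle for finite torsors, exactly as in \cite[\S 5]{MR3544295} and \cite[Theorem 3]{helmvoloch}.
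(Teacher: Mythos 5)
Your overall strategy coincides with the paper's: push the unobstructed adelic point into the Hilbert modular variety, use Theorem \ref{maincor} to produce an $L$-rational point inducing it, pull back to a finite subscheme $Z$ of $C$, and conclude $Z(\Q)\neq\emptyset$ by a Chebotarev-type argument. (The paper works over $L$ with the image $X=f(C_L)\subset Y_K(\mathfrak{N})$ and Stoll's functoriality \cite[Proposition 5.9]{MR2368954} rather than with $\Res_{L/\Q}$, but that is cosmetic.) However, two of your steps do not work as written. First, you identify the point $x_A$ produced by Theorem \ref{maincor} with the transported adelic point $\tilde Q$ by invoking ``Faltings's isogeny theorem over $L_w$''. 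The isogeny theorem is a statement over fields finitely generated over the prime field; it is \emph{false} over $p$-adic local fields (e.g.\ two non-isogenous elliptic curves over $\Q_p$ with good reduction and the same trace of Frobenius have isomorphic $T_\ell$ for all $\ell\neq p$). The paper instead gets this identification from the construction in Section \ref{proofmaincor} itself (the Fourier coefficients of the Hilbert modular form are pinned down by the traces of Frobenius of the compatible system attached to $(P_w)$, and $Y(L)$ injects into $Y(\A_{L,S})^{\text{f-cov}}$), not from any local-field isogeny theorem.

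Second, your final descent step is not correct as stated. A finite $\Q$-scheme with points in every completion need not have a $\Q$-point (take the three quadratic subfields of a biquadratic field), and for non-constant finite group schemes the local--global principle for torsors over $\Spec\Q$ can fail (Grunwald--Wang); moreover, triviality over $r_0$ of covers pulled back from $R$ does not by itself produce a point of $Z=g^{-1}(r_0)\subset C$. What is actually needed is that $(P_v)$ lies in the finite \emph{abelian} descent set of $C$ and on $Z$, and then Stoll's theorem for zero-dimensional schemes \cite[Theorem 8.2]{MR2368954} — which is exactly the careful Chebotarev argument you are gesturing at — yields $Z(\Q)\neq\emptyset$. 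This is how the paper concludes, taking $Z$ to be the $G_\Q$-orbit of the pull-back of $Q$ along $C_L\to X$. With these two repairs (replace local Faltings by the construction of Section \ref{proofmaincor}, and replace your torsor claim by the cited result of Stoll), your argument becomes the paper's proof.
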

Stoll \cite[Conjecture 9.1]{MR2368954} conjectured that every smooth projective curve is \emph{very good}. That is the closure of $C(\Q)$ in the adelic points of $C$ is equal to $C(\A_\Q)^{\text{f-ab}}$. For the definition of $C(\A_\Q)^{\text{f-ab}}$ we refer to Definition 6.1. in \emph{op. cit.}. For the moment we just need to know that
\begin{displaymath}
C(\Q)\subset C(\A_\Q)^{\text{f-cov}} \subset C(\A_\Q)^{\text{f-ab}} \subset C(\A_\Q).
\end{displaymath}
Hence Stoll's conjecture predicts the following implication
\begin{displaymath}
C(\A_\Q)^{\text{f-cov}}\neq \emptyset \Rightarrow C(\Q)\neq \emptyset,
\end{displaymath}
which we are able to prove, as a consequence of Theorem \ref{maincor}, for curves satisfying (1),(2) and (3) from the above corollary.  
\begin{proof}
Fix a point $(P_w)\in C(\A_\Q)^{\text{f-cov}}$, which is not empty by (1). Let $X/L$ be the image of $C$ in $Y:=Y_K(\mathfrak{N})$ under the map $f$ of assumption (2). Notice that, since $C(\A_\Q)^{\text{f-cov}}\neq \emptyset$, also $C(\A_L)^{\text{f-cov}}\neq \emptyset$, and therefore, by \cite[Proposition 5.9.]{MR2368954}, $X(\A_L)^{\text{f-cov}}$ and $Y(\A_L)^{\text{f-cov}}$ are not empty. The untwisted version of Theorem \ref{maincor}, for a suitable choice of a finite set $S$ of places of $L$, implies that $Y(L)\neq \emptyset$ (this is the only step where (3) is needed). The proof of Theorem \ref{maincor} (cf. section \ref{proofmaincor}) actually guarantees the existence of a point in $Y(L)$ inducing the fixed $f(P_w)\in Y (\A_{L,S})^{\text{f-cov}}$. Since $Y(L)$ injects into $Y(\A_{L,S})^{\text{f-cov}}$, there exists a unique $Q \in Y(L)$ inducing $f(P_w)\in  Y(\A_L)^{\text{f-cov}}$. Moreover, by construction, $f(P_w)$ lies in $X(\A_L)^{\text{f-cov}}$, and $X(L)=Y(L)\cap X(\A_L)^{\text{f-cov}}$. Eventually we conclude that $Q$ lies in $X(L)$.

Let $Z$ be the finite $\Q$-subscheme of $C$ given by the $G_{\Q}$-orbit of the pull-back of $Q$ along the surjective map 
\begin{displaymath}
C_L \to X.
\end{displaymath}
By construction, $(P_w)\in C(\A_\Q)^{\text{f-cov}} \subset C(\A_\Q)^{\text{f-ab}} $ lies in $Z(\A_\Q)$. In particular \cite[Theorem 8.2]{MR2368954} implies that $Z(\Q)\neq \emptyset$. That is, $C$ has at least one $\Q$-rational point, concluding the proof of the corollary.
\end{proof}

\subsection{Conjectures} We briefly state the conjectures appearing in Theorem \ref{main2}.
\subsubsection{Absolute Hodge Conjecture}
We only give a brief overview for the purpose of understaning the conjecture. For more details we refer to section 6 of Deligne-Milne's paper \cite{MR654325}, where Deligne's category of \emph{absolute motives} is described. Let $X,Y /\C$ be smooth projective varieties. A morphism of Hodge structures between their Betti cohomology groups corresponds to a Hodge class in the cohomology of $X\times Y$:
\begin{displaymath}
\Hom (H^*_{Betti}(X,\Q), H^*_{Betti}(Y,\Q) ) \cong H^{2*}_{Betti}(X\times Y, \Q).
\end{displaymath}

We say that a Hodge class  $\alpha \in H^{2i}_{Betti}(X\times Y, \Q)$, or a morphism of Hodge structures between their $H^i$'s, is absolute Hodge if, for every automorphism $\sigma$ of $\C$, the class $\alpha ^\sigma \in H^{2i}(X^\sigma\times Y^\sigma, \C)$ is again a Hodge class. With this definition we can split the classical Hodge conjecture in two parts:
\begin{center}
Hodge classes = Absolute Hodge classes = Algebraic cycles.
\end{center}
For the purpose of this paper, the following is enough.
\begin{conj}\label{absolutehodge}
If $X,Y /\C$ are smooth projective complex varieties such that, for some $i$, we have an isomorphism of Hodge structures
\begin{displaymath}
H^i_{Betti}(X,\Q)\cong H^i_{Betti}(Y,\Q),
\end{displaymath}
then there exists an absolute Hodge class inducing this isomorphism.
\end{conj}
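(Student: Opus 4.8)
The plan is to deduce the statement from the full Hodge conjecture, and---since that is out of reach---to record how it is established, following Deligne's theorem on absolute Hodge cycles, in the cases relevant to Theorem~\ref{main2}. First I would recall why Conjecture~\ref{absolutehodge} is \emph{weaker} than the Hodge conjecture: by the displayed identification above, an isomorphism of Hodge structures $H^i_{Betti}(X,\Q)\cong H^i_{Betti}(Y,\Q)$ is the same datum as a Hodge class $\alpha\in H^{2i}_{Betti}(X\times Y,\Q)$ (suitably Tate-twisted); if the Hodge conjecture holds then $\alpha$ is the class of an algebraic cycle $Z$ on $X\times Y$, and since the cycle class map is compatible with the Betti--de Rham comparison and since for every $\sigma\in\Aut(\C)$ the conjugate cycle $Z^\sigma$ is again algebraic on $(X\times Y)^\sigma$, its class $\alpha^\sigma$ is again a Hodge class, so $\alpha$ is absolute Hodge.

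Lacking the Hodge conjecture, the decisive step---and the only one actually needed in this paper---is to reduce to the Tannakian category of \emph{abelian motives} (generated by the cohomology of abelian varieties together with Artin motives) and to invoke Deligne's theorem that in this category every Hodge class is absolute Hodge. In the situation of Theorem~\ref{main2} and of \cite{MR2058605}, $Y$ is a variety built from a Hilbert modular variety whose cohomology realizes the motive of the Hilbert modular form produced by Serre's conjecture, and $X$ is the abelian variety one wants to construct. The point is that the relevant Hecke-isotypic piece of $H^i_{Betti}(Y,\Q)$ is an abelian motive: Hilbert modular varieties are of PEL (indeed Hodge) type, so that piece is cut out inside the cohomology of the universal abelian scheme over the Shimura variety---for the parallel weight $2$ relevant here the universal abelian scheme itself suffices, with no fibre powers needed---hence it lies in the subcategory generated by abelian varieties. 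Granting this, over $\C$ the Hodge structure in question is polarizable of weight one, so it is $H^1$ of a complex abelian variety $A_\C$, and the isomorphism $H^1_{Betti}(A_\C,\Q)\cong H^i_{Betti}(Y,\Q)$ is then a Hodge class between abelian motives, to which Deligne's theorem applies, yielding that it is absolute Hodge.

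The hard part is precisely this \emph{abelian motive} input and the descent it powers. For $L=\Q$ one bypasses the conjecture entirely, as Eichler--Shimura produces an honest abelian variety over $\Q$; the issue arises only for $L\neq\Q$, where one must know both that the weight-one motive attached to a Hilbert modular form over $L$ of parallel weight $2$ is of abelian type and, crucially, that the resulting isomorphism with the $H^1$ of an abelian variety can be taken over $L$ rather than merely over $\C$. The latter follows by combining the functoriality of absolute Hodge classes under the $\Aut(\C)$-action with the existence of a canonical model of the Hilbert modular variety over a number field: an absolute Hodge class between motives defined over a number field is $\Gal(\Qbar/L)$-equivariant on $\ell$-adic realizations, which is what ultimately forces $T_v A\cong\rho_v$ as required in Theorem~\ref{main2}. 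Making all of this uniform in the level $\mathfrak{N}$ and compatible with the $\Oo_K$-action and the polarization is the bookkeeping that a complete treatment would have to carry out.
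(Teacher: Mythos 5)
The statement you are asked about is Conjecture \ref{absolutehodge}: the paper states it as a hypothesis and offers no proof, so there is no ``paper's own proof'' to compare against --- and none should be expected, since the conjecture is open. Your proposal correctly records two true facts: that the statement would follow from the Hodge conjecture, and that Deligne's theorem establishes it when both Hodge structures lie in the Tannakian category generated by abelian varieties and Artin motives. But neither of these proves the conjecture as stated, which concerns arbitrary smooth projective complex varieties, and your write-up is therefore not a proof of the statement but a discussion of evidence for it.

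More importantly, your claim that the abelian-motive case is ``the only one actually needed in this paper'' is incorrect, and it misses exactly why the conjecture has to be assumed. The paper says explicitly that Conjecture \ref{absolutehodge} is applied with $X$ a \emph{Picard} modular variety and $Y$ an abelian variety. By Theorem \ref{matsu} the Hilbert modular variety has trivial $H^1$ for $n_F>1$, so Blasius cannot extract the desired weight-one Hodge structure from the universal abelian scheme over it, contrary to what you suggest; instead (see the remark after Theorem \ref{generalblasius}) he passes to the symmetric square, an automorphic representation of $\Gl_{3}$, base-changes to an imaginary quadratic field, and locates the corresponding motive in the \emph{middle-degree} cohomology of a Picard modular variety. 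That Hecke-isotypic piece is not known to be an abelian motive, so Deligne's theorem does not apply to the comparison class between it and the $H^1$ of the complex abelian variety one builds from it; if it did apply, Blasius would not need to assume the absolute Hodge conjecture at all, and the unconditional cases of Remark \ref{rmk2} would be the general case. So, read as a proof, your proposal has a genuine gap at its central step (the unproved reduction to abelian motives), and it also mislocates the geometry --- Hilbert versus Picard modular variety, degree one versus middle degree --- in the one application the paper actually makes.
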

More precisely, Conjecture \ref{absolutehodge} will be applied when $X$ is a Picard modular variety and $Y$ is an abelian variety. 
\subsubsection{Serre's weak conjecture over totally real fields}
We now explain the version of Serre's conjecture we need to assume, to obtain the main theorems when $L\neq \Q$ (see for example \cite[Conjecture 1.1]{MR2730374}, where it is referred to as a folklore generalisation of Serre's conjecture). For more details we refer to the introduction of \cite{MR2730374} and references therein. Given a prime $\ell$ here we denote by $\Ff_\ell$ a finite field with $\ell$ elements and by $\overline{\Ff}_\ell$ a fixed algebraic closure of $\Ff_\ell$.
\begin{conj}\label{ourserreconj}
Let $\ell$ be any odd prime and $\overline{\rho}: G_L \to \Gl_2( \overline{\Ff}_\ell)$ be an irreducible and
totally odd\footnote{Here totally odd means that $ \det(\overline{\rho}(c)) = -1$ for all $n_L$ complex conjugations $c$. } Galois representation. Then there exists some Hilbert modular eigenform $f$  for $L$ such that $\overline{\rho}$ is isomorphic to the reduction mod $\lambda$ of $\rho_{f,\lambda}$, where $\rho_{f,\lambda}$ is the $\lambda$-adic Galois representation attached to $f$ and $\lambda$ is a prime of the Hecke field of $f$ dividing $\ell$.
\end{conj}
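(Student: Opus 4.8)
The plan is to adapt the Khare--Wintenberger strategy for Serre's conjecture over $\Q$ to the totally real field $L$, systematically replacing classical modular forms by Hilbert modular forms for $L$. The proof would reduce the existence of a modular lift of $\overline{\rho}$ to two automorphic inputs: a \emph{potential modularity} statement (every irreducible, totally odd $\overline{\rho}$ becomes modular after base change to a suitable totally real extension) and a family of \emph{modularity lifting theorems} (once a residual representation is known to be modular, its geometric lifts of the expected Hodge--Tate weights and ramification type are modular as well). The mechanism tying these together is an inductive argument that attaches to $\overline{\rho}$ a strictly compatible system of $\ell$-adic representations and then transports modularity between the primes of this system.

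First I would fix a geometric lift of $\overline{\rho}$ to characteristic zero. Using Ramakrishna-type deformation arguments, in the form developed by Khare--Wintenberger and Gee over totally real fields, one produces a geometric representation $\rho: G_L \to \Gl_2(\Qlbar)$ with prescribed Hodge--Tate weights and controlled ramification whose reduction modulo $\ell$ is $\overline{\rho}$. By potential modularity (Taylor's method, extended to totally real fields), $\rho$ becomes modular after base change to some totally real \emph{solvable} extension $L'/L$; combining a modularity lifting theorem over $L'$ with solvable base change for $\Gl_2$, one descends to conclude that $\rho$ is itself modular, and hence that $\overline{\rho}$ is modular in the weight and level dictated by $\rho$. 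The heart of the argument is then the inductive step that removes the hypotheses on the weight and ramification: embedding $\rho$ into a strictly compatible system $\{\rho_{\ell'}\}_{\ell'}$, one switches to an auxiliary prime $\ell'$, applies the potential-modularity and modularity-lifting package at $\ell'$, and returns to $\ell$ with the ramification or the weight reduced, exactly as in the ``killing ramification'' double induction of Khare--Wintenberger.

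The main obstacle is the absence of unconditional base cases over a general totally real field. Over $\Q$ the induction terminates using Fontaine's discriminant bounds together with the Tate and Serre nonexistence results for representations of very low conductor and weight, which force the minimal cases to be modular; no such clean vanishing is available over $L$, so the descending induction has nowhere to bottom out. A second serious difficulty is that the modularity lifting theorems over totally real fields carry residual hypotheses---largeness of the image of $\overline{\rho}|_{G_{L(\zeta_\ell)}}$, the Taylor--Wiles condition, and restrictions for $\ell \in \{2,3,5\}$---which an arbitrary $\overline{\rho}$ as in the statement need not satisfy. Finally, even granting modularity of \emph{some} lift, identifying the precise \emph{weight} of the resulting Hilbert eigenform requires the weight part of Serre's conjecture in the Buzzard--Diamond--Jarvis formulation, itself a substantial ingredient. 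For these reasons the statement remains genuinely open in general, which is exactly why it is imposed as a hypothesis in Theorem \ref{main2}; a proof along the lines above would be contingent on resolving these base-case and residual-image issues.
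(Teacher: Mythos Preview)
The statement you were asked to prove is labeled \emph{Conjecture} in the paper, and the paper provides no proof of it; it is used as a standing hypothesis for Theorem \ref{main2}. The paper explicitly notes (immediately after stating the conjecture) that the case $L=\Q$ is the theorem of Khare--Wintenberger, that for general totally real $L$ only the $\Ff_3$-coefficient case is known via Langlands--Tunnell, and that a potential version is due to Taylor. So there is nothing in the paper to compare your argument against: the authors do not claim, and do not attempt, a proof.

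Your proposal is not a proof but a diagnosis, and as such it is accurate. You correctly identify the Khare--Wintenberger architecture (geometric lifts, potential modularity, compatible systems, prime-switching, modularity lifting, solvable descent) as the natural template, and you correctly locate the genuine obstructions over a general totally real $L$: the absence of the Fontaine/Tate/Serre nonexistence results that seed the induction over $\Q$, the Taylor--Wiles residual-image hypotheses that an arbitrary $\overline{\rho}$ need not satisfy, and the small-prime exclusions. Your closing remark that the conjecture is imposed as a hypothesis precisely because these issues are unresolved matches the paper's own stance. In short, you have not proved the statement---nor has anyone---but your assessment of why it remains open is sound.
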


\begin{rmk}\label{weak}
This is usually referred to as \emph{weak} Serre's conjecture, because there is no explicit recipe to compute the weight $k(\overline{\rho})$ and the level $N(\overline{\rho})$. It has been proven \cite{geeliusavitt, fuji} that the \emph{refined} version follows from the weak version under some assumptions. We state the results we need in Theorem \ref{thmweightcond}.
\end{rmk}

When $L=\Q$, this was proven by Khare and Wintenberger \cite{MR2480604}. When $L\neq \Q$, Conjecture \ref{ourserreconj} is known when the coefficient field is $\Ff_3$ (Langlands-Tunnell \cite{MR574808, MR621884}), but we really need to assume that the conjecture holds for all (but finitely many) $\ell$'s. Our strategy follows indeed the lines of the proof of modularity theorems assuming Serre's conjecture: starting from a system of Galois representations, we produce a Hilbert modular form whose Fourier coefficients are equal to the traces of Frobenii modulo infinitely many primes and hence are equal as elements of $\Oo_K$.

Finally, a potential version of the above conjecture has been proven in \cite[Theorem 1.6]{MR1954941}. There Taylor proves a potential modularity result: if $\bar{\rho}: G_{L}\to \Gl_2(\overline{\mathbb{F}}_{\ell})$ is a totally odd irreducible representation with determinant equal to the cyclotomic character, then there exists $L'/L$ a finite totally real Galois extension such that all the primes of $L$ above $\ell$ split in $L'$ and there exists $f$ a Hilbert modular form for $L'$ such that $\bar{\rho}_{f,\lambda'}$ is isomorphic to $\bar{\rho}$ restricted to $G_{L'}$.

\subsection{Related work}
We compare our results with \cite[Theorem 3.1 and Theorem 3.7]{MR3544295} (later also extended to the moduli space of K3 surfaces by the first author \cite[Theorem 1.3]{MR3951650} and Klevdal \cite[Theorem 1.1]{klevdal}, where a finite extension of the base field is however required). In the approach of Patrikis, Voloch and Zarhin there are no restrictions on the base field, wheras here it is crucial that $L$ is a totally real field. We believe that it is easier to make the results of this paper unconditional. We notice here that the absolute Hodge conjecture is not enough for such papers. In \cite{MR3544295, MR3951650, klevdal} the Hodge conjecture is not only needed to descend complex abelian varieties over number fields. Finally the version of Serre's conjecture we are assuming here is always about $\Gl_2$-coefficients, and so is certainly easier than the full Fontaine-Mazur conjecture \cite{MR1363495}.

Thanks to the recent breakthroughs on potential modularity over CM fields \cite{potential}, it should be possible to extract from the main result of \cite[Section 7.1]{potential} the following. Let $\{\rho_v\}_v$ be a compatible system as in section \ref{mainresults}, and let $n_L>1$. Under the validity of Conjecture \ref{absolutehodge} there exist a totally real extension $L'/L$ and $n_K$-dimensional abelian variety $A/L'$ with $\Oo_K$-multiplication, such that, for every $v$, $T_v A$ is isomorphic to $\rho_v$ as representations of $G_{L'}$.

Since Conjecture \ref{absolutehodge} can be avoided in many interesting cases, as recalled in Remark \ref{rmk2}, a potential but unconditional version of Theorem \ref{maincor} can therefore be obtained. Unfortunately the extension $L'/L$ depends on the system of Galois representations $\{\rho_v\}_v$, and it is not easy to control a priori its degree over $L$.
 
\subsection{Outline of the paper}
In Section \ref{recap} we collect all the results we need about Hilbert modular forms (especially how Eichler-Shimura works in this setting). In Section \ref{sectionserre}, which is the heart of the paper, we prove Theorems \ref{main}.A-\ref{main}.B. We then explain how these results are related to the finite descent obstruction for Hilbert modular varieties in Section \ref{finitesection}, eventually proving Theorem \ref{maincor}.
\subsection{Acknowledgements} It is a pleasure to thank Toby Gee for helpful discussions about the weight part of Serre's conjecture and the Taylor--Wiles assumption. We thank Matteo Tamiozzo for his comments on an earlier draft of this paper. We would also like to express our gratitude to the anonymous referees whose comments improved the paper. This work was supported by the Engineering and Physical Sciences Research Council [EP/ L015234/1], the EPSRC Centre for Doctoral Training in Geometry and Number Theory (The London School of Geometry and Number Theory) and University College London.

\section{Recap on Hilbert modular varieties and modular forms}\label{recap}
We recall some general facts about Shimura varieties. The reader interested only in Hilbert modular varieties may skip section \ref{shimura}, which is not fundamental for the main results. We then focus on Hilbert modular varieties and Hilbert modular forms, explaining how they give rise to certain principally polarised abelian varieties.
\subsection{A question on rational points on Shimura varieties}\label{shimura}
Let $\DT$ denote the real torus $\Res_{\C / \R} (\Gm)$, and $\A_\Q^f $ be the finite adeles of $\Q$. A Shimura datum is a pair $(G,X)$ where $G$ is a reductive $\Q$-algebraic group and $X$ a $G(\R)$-orbit in the set of morphisms of $\R$-algebraic groups $\Hom(\DT, G_\R)$, satisfying the Shimura-Deligne axioms (\cite[Conditions 2.1.1(1-3)]{deligneshimura}); furthermore, in what follows we also assume that $G$ is the generic Mumford-Tate group on $X$. The Shimura-Deligne axioms imply that the connected components of $X$ are hermitian symmetric domains and that faithful representations of $G$ induce variations of polarisable $\Q$-Hodge structures on $X$. Let $\widetilde{K}$ be a compact open subgroup of $G(\A_\Q^f)$ and set
\begin{displaymath}
\Sh_{\widetilde{K}}(G,X) := G(\Q)  \backslash \left ( X \times G(\A_\Q^f) / \widetilde{K} \right).
\end{displaymath}
Let $X^+$ be a connected component of $X$ and $G(\Q)^+$ be the stabiliser of $X^+$ in $G(\Q)$. The above double coset set is a disjoint union of quotients of $X^+$ by the arithmetic groups $\Gamma_g:=G(\Q)^+ \cap gKg^{-1}$, where $g$ runs through a set of representatives for the finite double coset set $G(\Q)^+ \backslash G(\A_\Q^f )/K$. Baily and Borel \cite{MR0216035} proved that $\Sh_{\widetilde{K}}(G,X)$ has a unique structure of a quasi-projective complex algebraic variety. Thanks to the work of Borovoi, Deligne, Milne and Milne-Shih, among others, the $\C$-scheme
\begin{displaymath}
\Sh (G,X)= G(\Q)  \backslash \left ( X \times G(\A_\Q^f) \right),
\end{displaymath}
together with its $G(\A_\Q^f)$-action, can be naturally defined over a number field $E:= E(G,X) \subset \C$ called the \emph{reflex field} of $(G,X)$. That is there exists an $E$-scheme $\Sh (G,X)_E$ with an action of $G(\A_\Q^f)$ whose base change to $\C$ gives $\Sh (G,X)$ with its $G(\A_\Q^f)$-action.

Let $F$ be a finite extension of $E$ such that there exists a point $x \in \Sh_{\widetilde{K}}(G,X)_E(F)$. To such a point we can naturally associate a continuous group homomorphism
\begin{displaymath}
\rho_x : G_F:=\Gal(\overline{F}/F) \to \widetilde{K} \subset G(\A_\Q^f).
\end{displaymath}
This paper is motivated by the following.
\begin{question}\label{question}
Let $F$ be a field as above and let $\rho : G_F \to \widetilde{K} \subset G(\A_\Q^f)$ be a Galois representation. What are necessary and sufficient conditions such that there exists $x \in \Sh_{\widetilde{K}}(G,X)_E(F)$ and $\rho = \rho_x$?
\end{question}
When the Shimura variety has a natural interpretation as a moduli space of motives, the above question is naturally related to the Fontaine--Mazur conjecture \cite{MR1363495}. Indeed both aim to predict when a Galois representation comes from the $\ell$-adic (or adelic in our case) realisation of a motive. Even when they are not motivical (see \cite[Remark 1.6]{2018arXiv180204682B} for a discussion about this), representations arising in this way enjoy nice properties. For an example of geometric flavour we refer to \cite[Theorem 1.3]{2018arXiv180204682B}.

\subsection{Hilbert modular varieties}\label{hilb}
Let $F/\Q$ be a totally real extension of degree $n_F$ and fix $\{\sigma_i\}_{i=1}^{n_F}$ the set of real embeddings of $F$ into $\C$. We let $G$ be the $\Q$-algebraic group obtained as the Weil restriction of $\Gl_2$ from $F$ to $\Q$ and $X$ be $n_F$ copies of $\hh^{\pm}=\{\tau\in\C: \im(\tau)\neq 0\}$, on which $G(\Q)=\Gl_2(F)$ acts on the $i$th component via $\sigma_i$. That is
\begin{displaymath}
\left(\left( \begin{matrix} 
a & b \\
c & d 
\end{matrix}\right)\cdot (\tau_1,\dots,\tau_{n_F})\right)_i = \frac{\sigma_i(a) \tau_i+\sigma_i(b)}{\sigma_i(c)\tau_i+\sigma_i(d)}.
\end{displaymath}

In this case, the reflex field of $(G,X)$ is $\Q$, and the set of geometrically connected components of the Shimura variety $S:=\Sh_{G(\widehat{\Z})}(G,X)$ is $\Pic(\Oo_F)^+$, where $\widehat{\Z}$ denotes the profinite completion of $\Z$ (different choices of level structure will appear later).
\begin{rmk}\label{rmk1}
To obtain a Shimura variety from the above construction, it is fundamental that $F$ is totally real. Indeed if $F$ is a number field, $G$ an algebraic $F$-group, then the real points of $\Res_{F/\Q}G$ have a structure of Hermitian symmetric space if and only if $F$ is a totally real field and the symmetric space associated to each real embedding of $F$ is Hermitian. 
\end{rmk}
It is interesting to notice here a first difference between modular curves (i.e. when $F=\Q$) and higher dimensional Hilbert modular varieties. We recall the following folklore result, see  \cite[Section 2.3.2.]{MR2058605} to see how it follows from Matsushima's formula \cite[Theorem VII.5.2]{MR1721403}.
\begin{thm}\label{matsu}
Let $(G,X)$ be a Shimura datum as above and let $\widetilde{K}$ be a neat\footnote{A neat subgroup of $G(\A_{\Q}^f)$ is an open compact subgroup such that every element of $\widetilde{K}\cap G(\Q)$ is neat. Recall that an element $g$ of $G(\Q)$ is called neat if the subgroup of $\overline{\Q}^{\times}$ generated by the eigenvalues of $g$ in some faithful representation $V$ of $G$ is free (that is, there is no nontrivial elements of finite order). This is independent of $V$, as all faithful representations $W$ are obtained from $V$ via sums, tensor products, duals and subquotients, hence the group in question is the set of eigenvalues that occur in $W$.} subgroup of $G(\A_{\Q}^f)$. Consider $S_{\widetilde{K}}$, the Shimura variety associated to $(G,X)$ and $\widetilde{K}$. Unless $n_F=1$, the first group of Betti cohomology of $S_{\widetilde{K}}$ with rational coefficients is trivial. In particular there are no non-constant maps from $S_{\widetilde{K}}$ to an abelian variety.
\end{thm}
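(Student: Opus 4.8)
The plan is, following \cite[Section 2.3.2]{MR2058605}, to read off $H^1(S_{\widetilde K},\C)$ from Matsushima's formula and then deduce the statement about maps to abelian varieties. By Matsushima's formula \cite[Theorem VII.5.2]{MR1721403}, extended to the non-compact quotient $S_{\widetilde K}$ through its Borel--Serre compactification (equivalently, Borel's and Franke's description of the cohomology of arithmetic groups), there is an isomorphism
\[ H^1(S_{\widetilde K},\C)\;\cong\;\bigoplus_{\pi} m(\pi)\; H^1(\mathfrak g,K_\infty;\pi_\infty)\otimes(\pi_f)^{\widetilde K}, \]
where $\pi=\pi_\infty\otimes\pi_f$ runs over the automorphic representations of $G(\A_\Q)=\Gl_2(\A_F)$ occurring in the space of automorphic forms, $\mathfrak g=\Lie G(\R)=\mathfrak{gl}_2(\R)^{\oplus n_F}$, and $K_\infty=\prod_{i=1}^{n_F}K_i$ with each $K_i\cong\CC^\times$ the stabiliser of a point of $\hh^{\pm}$. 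Writing $\pi_\infty=\bigotimes_i\pi_i$, the Künneth formula for relative Lie algebra cohomology gives
\[ H^1(\mathfrak g,K_\infty;\pi_\infty)=\bigoplus_{j=1}^{n_F} H^1(\mathfrak{gl}_2(\R),K_j;\pi_j)\otimes\bigotimes_{i\neq j}H^0(\mathfrak{gl}_2(\R),K_i;\pi_i). \]
A non-zero summand forces $\pi_j$ to have non-zero cohomology in degree $1$ — hence, up to a character twist, to be a (limit of) discrete series, in particular infinite dimensional — and every $\pi_i$ with $i\neq j$ to have non-zero cohomology in degree $0$ — hence to be one dimensional and trivial on $\Sl_2(\R)$. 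This is the only point where the hypothesis $n_F>1$ is used: it guarantees the existence of an index $i\neq j$.

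The next step is to show that no automorphic representation of $\Gl_2(\A_F)$ has an archimedean component of this shape. The argument I would use: if $\pi_i$ is trivial on $\Sl_2(F_i)$ at some archimedean place $i$, then an automorphic form lying in $\pi$ is right-invariant under $\Sl_2(F_i)\cong\Sl_2(\R)$ (its Lie algebra acts trivially on $\pi_i$, and $\Sl_2(\R)$ is connected) as well as left-invariant under $\Gl_2(F)$; since $\Sl_2$ is simply connected and $\Sl_2(F_i)$ is non-compact ($F$ being totally real), strong approximation makes $\Sl_2(F)\cdot\Sl_2(F_i)$ dense in $\Sl_2(\A_F)$, so the form is right-$\Sl_2(\A_F)$-invariant, hence factors through the determinant; thus $\pi=\chi\circ\det$ for a Hecke character $\chi$, contradicting the presence of the infinite-dimensional factor $\pi_j$. (Alternatively one separates the cuspidal, residual and Eisenstein parts: cuspidal representations of $\Gl_2$ are generic and so admit no one-dimensional local constituent; the residual spectrum consists of the $\chi\circ\det$, which contribute only in even degrees; and the remaining Eisenstein contributions are controlled by Harder's computation of the Eisenstein cohomology of $\Gl_2$ over a totally real field.) Hence $H^1(S_{\widetilde K},\C)=0$, so $H^1(S_{\widetilde K},\Q)=0$.

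For the final sentence I would pass to a smooth projective compactification $\overline S\supseteq S_{\widetilde K}$: the restriction map $H^1(\overline S,\Q)\to H^1(S_{\widetilde K},\Q)$ is injective because a class in its kernel is supported on the boundary, which has codimension $\geq 1$; hence $H^1(\overline S,\Q)=0$ and therefore $H^0(\overline S,\Omega^1_{\overline S})=0$. Any morphism $S_{\widetilde K}\to A$ to an abelian variety extends to a morphism $\overline S\to A$ (morphisms into abelian varieties extend across closed subsets of codimension $\geq 2$), and the extension pulls back every invariant $1$-form on $A$ to $0$; so its differential vanishes, it is constant, and so was the original morphism.

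The step I expect to be the main obstacle is the control of the Eisenstein (non-cuspidal, non-residual) contributions to $H^1$: making Borel's comparison theorem precise for the non-compact $S_{\widetilde K}$ and verifying that the strong-approximation argument genuinely applies to the automorphic forms representing Eisenstein cohomology classes — or else invoking Harder's explicit vanishing — is where the real work lies, the cuspidal and residual cases being immediate from the archimedean computation above.
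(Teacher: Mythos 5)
Your argument is precisely the route the paper has in mind: Theorem \ref{matsu} is stated there as a folklore result with no proof beyond the citation of Blasius \cite[Section 2.3.2]{MR2058605} and Matsushima's formula \cite[Theorem VII.5.2]{MR1721403}, and your expansion (Matsushima/Franke plus K\"unneth, the observation that a degree-one class forces one archimedean component to be cohomological in degree $1$ and the others one-dimensional, and the $\Gl_2$ dichotomy ruling this out, with the Eisenstein contribution correctly flagged as the delicate point) is a faithful and correct account of what those references contain. One small repair in the last step: the boundary of $\overline S$ has codimension one, so the extension of $S_{\widetilde K}\to A$ to $\overline S$ should be justified by the standard fact that a rational map from a smooth variety to an abelian variety is everywhere defined, rather than by extension across closed subsets of codimension $\geq 2$.
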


To have a better interpretation as moduli space we actually consider the subgroup $G^*$ of $G$ given by its elements whose determinant is in $\Q$. More precisely we let $G^*$ be the pull-back of
\begin{displaymath}
\det: G \to \Res _{\Oo_F /\Z}\Gm
\end{displaymath}
to $\Gm/\Q$. The Shimura variety $Y_F:=\Sh_{G^*(\widehat{\Z})}(G^*,X^*)(\C)$ is connected and comes with a finite map to $S/\C$. It is a quasi projective $n_F$-dimensional $\Q$-scheme.

In the next section we present the moduli problem solved by $Y_F$. It will be clear also from such moduli interpretation that the reflex field of $Y_F$ is the field of rational numbers. 
\subsubsection{Hilbert modular varieties as moduli spaces}\label{modulispace}
As explained for example in \cite[Section 3]{hilbertbas}, the Shimura variety $Y_F$ represents the (coarse) moduli space for triplets $(A, \alpha, \lambda)$ where:
\begin{itemize}
\item $A$ is a complex abelian variety of dimension $n_F$;
\item $\alpha : \Oo_F \hookrightarrow \End (A)$ is a morphism of rings;
\item $\lambda : A \to A^*$ is a principal $\Oo_F$-polarisation.
\end{itemize}
By $A^*$ we denoted the $\Oo_F$-dual abelian variety of $A$, i.e. it is defined as $\Ext^1(A,\Oo_F\otimes \Gm)$. Otherwise one can obtain such abelian variety considering $A^\vee$ (the dual of $A$, in the standard sense) and tensoring it over $\Oo_F$ with the different ideal of the extension $F/\Q$. By principal $\Oo_F$-polarisation we mean an isomorphism $\lambda : A \to A^*$, such that the induced map $A\to A^\vee$ is a polarisation.

Furthermore, the Shimura variety of level
\begin{displaymath}
U_0(\mathfrak{N}):= \left\lbrace \gamma\in G(\widehat{\Z}): \gamma \equiv \SmallMatrix{*&*\\0&1} \text{ mod } \mathfrak{N}\right\rbrace,
\end{displaymath}
where $\mathfrak{N}$ is an integral ideal of $\Oo_F$, parametrises triplets as above, equipped with a $\mathfrak{N}$-level structure as follows. We fix an isomorphism of $\Oo_F$-modules
\begin{displaymath}
\left(\Oo_F/\mathfrak{N} \Oo_F \right)^2 \to A[\mathfrak{N}]
\end{displaymath}
making the following diagram commutative:
\begin{displaymath}
\begin{tikzcd}
\left(\left(\Oo_F/\mathfrak{N} \Oo_F \right)^2\right)^2 \arrow{d}{\psi_{\mathfrak{N}}} \arrow{r}{} &  A[\mathfrak{N}]^2 \arrow{d}{e_{\lambda,\mathfrak{N}}}\\
\Oo_F\otimes \Z/N\Z \arrow{r}{} & \Oo_F \otimes \mu_N,
\end{tikzcd}
\end{displaymath}
where $(N)=\Z\cap \mathfrak{N}$, $\psi_{\mathfrak{N}}$ is the pairing given by $\SmallMatrix{0 &1\\-1&0}$, $e_{\lambda,\mathfrak{N}}$ is the perfect Weil pairing on $A[\mathfrak{N}]$ induced by the $\Oo_F$-polarisation $\lambda$ and $\Oo_F\otimes \Z/N\Z\to \Oo_F \otimes \mu_N$ is an arbitrarily chosen isomorphism. When a level structure is needed, we always assume that $N>3$ to have a \emph{fine} moduli space. 
A rational point of $Y_F(\mathfrak{N}):=\Sh_{U_0(\mathfrak{N})\cap G^*}(G^*,X)$ represents then a triple as above together with such level structure. In section \ref{integralmodels}, we will see a similar description for $\Oo_{L,S}$-points of an \emph{integral} model of $Y_F(\mathfrak{N})$. 
 
\subsection{Eichler-Shimura theory}\label{eichlershimura}
We discuss Eichler-Shimura theory for classical and Hilbert modular forms, reviewing results that attach opportune abelian varieties to modular forms. 
\subsubsection{Classical modular eigenform}
The following is \cite[Theorem 7.14, page 183 and Theorem 7.24, page 194]{shimura}.
\begin{thm}[Shimura]\label{eichlershimurathm}
Let $f$ be a holomorphic newform of weight $2$ with rational Fourier coefficients $(a_n(f))_n$. There exists an elliptic curve $E/\Q$ such that, for all primes $p$ at which $E$ has good reduction one has
\begin{displaymath}
a_p(f) = 1- N_p(E) + p,
\end{displaymath}
where $N_p(E)$ denotes the number of points of the reduction mod $p$ of $E$, over the field with $p$-elements. In other words, up to a finite number of Euler factors, $L(s,E/\Q)$ and $L(s,f)$ coincide. 

More generally, let $K(f)$ be the subfield of $\C$ generated over $\Q$ by $(a_n(f))_n$ for all $n$. Then there exists an abelian variety $A/\Q$ and an isomorphism $K(f) \cong\End^0(A)$ with the following properties:
\begin{itemize}
\item $\dim (A)=[K(f):\Q]$;
\item Up to a finite number of Euler factors at primes at which $A$ has good reduction $L(s,A/\Q,K(f))$ coincides with $L(s,f)$;
\end{itemize}
where the $L$-function $L(s,A/\Q,K(f))$ is defined by the product of the following local factors where $v$ is a prime of $K(f)$ not dividing $\ell$
\begin{displaymath}
\det(1-\ell^{-s}\operatorname{Frob}_{\ell}|T_v(A)).
\end{displaymath}
\end{thm}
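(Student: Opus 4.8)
The plan is to reduce everything to the well-known construction of the abelian variety quotient of the Jacobian $J_1(N)$ attached to a weight-$2$ newform, as developed by Shimura. First I would work at a level $\Gamma_1(N)$ such that $f$ is a newform of level dividing $N$; the modular curve $X_1(N)$ is defined over $\Q$, and so is its Jacobian $J_1(N)$. The Hecke algebra $\mathbb{T}$ generated by the operators $T_p$ (for $p\nmid N$) and $\langle d\rangle$ acts on $J_1(N)$ by $\Q$-endomorphisms, via functoriality of $\Pic^0$ applied to the Hecke correspondences, which are themselves defined over $\Q$. The newform $f$ determines a ring homomorphism $\lambda_f\colon \mathbb{T}\to K(f)$, $T_p\mapsto a_p(f)$, whose kernel $I_f$ is a prime ideal; one then sets $A:=A_f:=J_1(N)/I_f J_1(N)$. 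Because $I_f$ is $\Gal(\overline{\Q}/\Q)$-stable (the Hecke operators commute with the Galois action on $J_1(N)$), $A_f$ descends to an abelian variety over $\Q$. The residual action of $\mathbb{T}/I_f\hookrightarrow K(f)$ equips $A_f$ with a $K(f)$-action, giving an injection $K(f)\hookrightarrow\End^0(A_f)$.

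Next I would compute the dimension and identify the endomorphism algebra. Eichler--Shimura theory gives a Hecke- and Galois-equivariant isomorphism (after tensoring with $\CC$) between $H^1(X_1(N),\QQ)$ and $S_2(\Gamma_1(N))\oplus\overline{S_2(\Gamma_1(N))}$; the $f$-isotypic part of the cuspidal space is spanned by the $\Gal(\overline{\Q}/\Q)$-conjugates $f^\sigma$, $\sigma\in\Hom(K(f),\CC)$, and there are exactly $[K(f):\QQ]$ of these by multiplicity one for newforms. Hence $H^1(A_f,\QQ)$ has dimension $2[K(f):\QQ]$, so $\dim A_f=[K(f):\QQ]$, and since $K(f)$ acts with $H^1(A_f,\QQ)$ free of rank $2$ over $K(f)$, dimension reasons force $\End^0(A_f)=K(f)$. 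For the first assertion, if $K(f)=\QQ$ then $\dim A_f=1$ and $A_f$ is the desired elliptic curve $E$ (possibly after choosing a $\QQ$-rational point, which exists as $\Pic^0$ has the origin).

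Finally I would match the $L$-functions via the Eichler--Shimura congruence relation. For a prime $\ell\nmid N$ (good reduction for $J_1(N)$, hence for $A_f$), the congruence relation states that, on the reduction mod $\ell$, $T_\ell = \fr_\ell + \langle\ell\rangle\,\fr_\ell^{-1}\cdot\ell$ as correspondences; equivalently, the characteristic polynomial of $\fr_\ell$ acting on the $v$-adic Tate module $T_v(A_f)$ (for $v\nmid\ell$ a place of $K(f)$) is $X^2 - a_\ell(f)X + \ell$, using $\det(\rho)=\chi_\ell$ which follows because the Weil pairing realises $\QQ_\ell(1)$ as $\wedge^2$. Therefore the local factor $\det(1-\ell^{-s}\fr_\ell\mid T_v(A_f))$ equals $1 - a_\ell(f)\ell^{-s} + \ell^{1-2s}$, which is precisely the $\ell$-th Euler factor of $L(s,f)$; taking the product over all $\ell\nmid N$ gives agreement of $L(s,A_f/\QQ,K(f))$ and $L(s,f)$ up to finitely many Euler factors, and in the elliptic-curve case this unwinds to $a_\ell(f)=\ell+1-N_\ell(E)$ by the Lefschetz trace formula $N_\ell(E)=\ell+1-\tr(\fr_\ell\mid T_vE)$.

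The main obstacle is the Eichler--Shimura congruence relation and the verification that $A_f$ genuinely has good reduction at all $\ell\nmid N$ together with the compatibility of the Hecke action with reduction; this rests on Igusa's theorem on the good reduction of $X_1(N)$ outside $N$ and a careful analysis of the modular correspondence in characteristic $\ell$ (the Frobenius and Verschiebung decomposition). All of this is classical and is exactly what is carried out in the cited sections of Shimura's book, so for our purposes we may simply invoke it.
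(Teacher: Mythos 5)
Your proposal is correct and follows exactly the route the paper takes: the paper does not reprove this result but cites \cite[Theorems 7.14 and 7.24]{shimura} and summarises the construction in one line (``Shimura considers the Jacobian of the modular curve of level $N$ and takes the quotient by the kernel of the homomorphism giving the Hecke action on $f$''), which is precisely the quotient $J_1(N)/I_fJ_1(N)$ together with the Eichler--Shimura congruence relation that you spell out. The only point worth tightening is the claim that ``dimension reasons force $\End^0(A_f)=K(f)$'': the identification of the full ($\Q$-rational) endomorphism algebra really uses irreducibility of the associated Galois representations and that $H^1$ is free of rank $2$ over $K(f)$, not dimension alone.
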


Shimura considers the Jacobian of the modular curve of level $N$ and takes the quotient by the kernel of the homomorphism giving the Hecke action on $f$. What happens if we want to produce an abelian variety with such properties, defined over our totally real field $F$, when $\Q\subsetneq F$? Here is where Hilbert modular forms come into play. In the next section we recall what we need from such theory, and explain Blasius' generalisation of Theorem \ref{eichlershimurathm} and why the absolute Hodge conjecture is needed.

\subsubsection{Hilbert modular forms for $F$}\label{hilbertmodularforms}
Let $\hh_F$ denote $n_F$ copies of the upper half plane $\hh^+$. We consider subgroups $\Gamma \subset \Gl_2(\Oo_F)$ of the form $U_0(\mathfrak{N}) \cap G(\Q)^+$. Moreover for $\lambda\in F$ and $\underline{r}=(r_1,\dots, r_{n_F})\in \Z^{n_F}$, we write $\lambda^{\underline{r}}=\lambda_1^{r_1}\cdots\lambda_{n_F}^{r_{n_F}}$, where $\lambda_i=\sigma_i(\lambda)$. Similarly if $\underline{\tau}=(\tau_1,\dots,\tau_{n_F})\in \hh_F$, we write $\underline{\tau}^{\lambda}=\tau_1^{r_1}\cdots\tau_{n_F}^{r_{n_F}}$.
\begin{defi}
A Hilbert modular form of level $\mathfrak{N}$ and weight $(\underline{r},w)\in \Z^{n_F}\times \Z$, with $r_i\equiv w \mod 2$ (and trivial nebentype character) is a holomorphic function $f: \hh_F \rightarrow \C$ such that 
\begin{displaymath}
f(\gamma \cdot \underline{\tau})= (\det\gamma)^{-\underline{r}/2}(c\underline{\tau}+d)^{\underline{r}}f(\underline{\tau}),
\end{displaymath}
for every $\gamma=\SmallMatrix{a & b \\
c & d} \in U_0(\N) \cap G(\Q)^+$ and for every $\underline{\tau}=(\tau_1,\dots,\tau_{n_F})\in\hh_F$.
\end{defi}
%
Since Hilbert modular forms are holomorphic functions on $\hh_F$ invariant under the lattice $\Oo_F$, they admit a $q$-expansion over $\Oo_F^{\vee}=\mathfrak{d}^{-1}$, where $q=e^{2\pi i \sum \tau_i}$, see \cite[Definition 3.1]{Goren} for more details. 

\subsubsection{Hecke operators and Hilbert eigenforms}
On the space of Hilbert modular forms of level
\begin{displaymath}
U_0(\N) \cap G(\Q)^+,
\end{displaymath}
one has Hecke operators $T(\mathfrak{n})$ for every integral ideal of $\Oo_F$ coprime with $\N$. The definition is analogous to the one for classical modular forms. For example, if $\mathfrak{p}$ does not divide $\N$ and $x$ is a totally positive generator of $\mathfrak{p}$, one defines
\begin{displaymath}
(T(\mathfrak{p})f)(\underline{\tau}):=\text{Nm}(\mathfrak{p}) f(x\cdot\underline{\tau})+ \frac{1}{\text{Nm}(\mathfrak{p})}\sum_{a\in\Oo_F/\mathfrak{p}}f(\gamma_a \cdot \underline{\tau}),
\end{displaymath} 
where $\gamma_a:= \SmallMatrix{1 & a \\
0 & x}$. We recall the following.
\begin{defi}
A cuspidal Hilbert modular form (i.e. such that the $0$-th Fourier coefficient $a_0(f)$ vanishes) is an eigenform if it is an eigenvector for every Hecke operator $T(\mathfrak{n})$. 
\end{defi}

As in the case of classical modular forms, if $f$ is an eigenform, normalised so that $a_1(f) = 1$, then the eigenvalues of the Hecke operators are the Fourier coefficients, i.e. $T(\mathfrak{n})f = a_{\mathfrak{n}}(f)\cdot f$; moreover they are algebraic integers lying in the number field $K(f):=\Q((a_{\mathfrak{n}}(f))_{\mathfrak{n}})$, as shown in \cite[$\S$2]{shimurahilb}.

\subsubsection{Eichler--Shimura for Hilbert modular forms}
Blasius and Rogawski \cite{blasiusrogawski}, Carayol \cite{MR870690} and Taylor \cite{MR1016264} proved that to any Hilbert eigenform, one may attach representations of $G_F$, similarly to the classical case. More precisely, one has the following result.
\begin{thm}
If $f$ is a Hilbert eigenform for $F$ of weight $(\underline{r},t)$, level $\mathfrak{N}$ and trivial nebentype character and $K(f)$ is the number field generated by its eigenvalues, then for every finite place $\lambda$ of $K(f)$ there is an irreducible 2-dimensional Galois representation
\begin{displaymath}
\rho_{f,\lambda}: G_F \to \Gl_2(K(f)_{\lambda})
\end{displaymath}
such that for every prime $w\nmid \mathfrak{N}\operatorname{Nm}_{K(f)/\Q}(\lambda)$ in $F$, $\rho_{f,\lambda}$ is unramified at $w$ and
\begin{displaymath}
\det(1-X\rho_{f,\lambda}(\operatorname{Frob}_{w}))=X^2-a_w(f)X+\operatorname{Nm}_{F/\Q}^{t-1}(w).
\end{displaymath}
\end{thm}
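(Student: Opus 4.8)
The plan is to realise $\rho_{f,\lambda}$ inside the \'etale cohomology of a Shimura curve whenever one is available, and to fall back on a congruence argument otherwise. First associate to $f$ a cuspidal automorphic representation $\pi=\bigotimes_v\pi_v$ of $\Gl_2(\A_F)$ of conductor $\mathfrak{N}$, trivial central character (after a unitary normalisation), with each archimedean component $\pi_{\sigma_i}$ the discrete series of weight governed by $(r_i,t)$, and so that the Hecke eigenvalues $a_{\mathfrak{n}}(f)$ record the Hecke data of $\pi$ at the unramified places; cuspidality holds because $f$ is a cusp form. The construction then splits according to whether $\pi$ admits a square-integrable (special or supercuspidal) component at some finite place --- automatic when $n_F$ is odd, since then a quaternion algebra split at exactly one archimedean place and unramified at every finite place already exists.

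\emph{Case 1 (a finite place $\mathfrak{q}$ of discrete series exists, or $n_F$ is odd and $\mathfrak{q}$ is taken empty).} Choose a quaternion algebra $B/F$ split at exactly one archimedean place $\sigma_1$, ramified at the remaining $n_F-1$ archimedean places, and --- when $n_F$ is even --- ramified in addition at $\mathfrak{q}$; this has an even number of ramified places. By Jacquet--Langlands, $\pi$ transfers to an automorphic representation $\pi'$ of $B^\times(\A_F)$. Let $M_U/F$ be Carayol's canonical model of the associated Shimura curve, with $U$ maximal away from $\mathfrak{N}\mathfrak{q}$ and small enough that $(\pi')^U\neq 0$, and let $\mathcal{F}_{\underline r,t}$ be the $\ell$-adic local system on $M_U$ attached to $\bigotimes_i\mathrm{Sym}^{r_i-2}$ of $\Res_{F/\Q}\Gl_2$, twisted by the power of the determinant prescribed by $t$. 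Then the $\pi'$-isotypic subspace of $H^1_{\et}(M_{U,\overline{F}},\mathcal{F}_{\underline r,t})\otimes K(f)_\lambda$, characterised by $T(\mathfrak{n})$ acting through $a_{\mathfrak{n}}(f)$ for $(\mathfrak{n},\mathfrak{N}\mathfrak{q})=1$, is a two-dimensional $K(f)_\lambda$-vector space with a continuous $G_F$-action; this is $\rho_{f,\lambda}$, and it is unramified away from $\mathfrak{N}\mathfrak{q}\ell$, where $M_U$ has good reduction. For $w\nmid\mathfrak{N}\mathfrak{q}\ell$, Carayol's integral model together with the Eichler--Shimura congruence relation on its special fibre at $w$ --- $T_w$ reducing to $\operatorname{Frob}_w$ plus its transpose, giving $\operatorname{Frob}_w^2-T_w\operatorname{Frob}_w+\langle w\rangle\operatorname{Nm}_{F/\Q}(w)^{t-1}=0$ on the relevant cohomology, with $\langle w\rangle$ trivial by triviality of the nebentype --- and the Weil bounds yield $\det(1-X\rho_{f,\lambda}(\operatorname{Frob}_w))=X^2-a_w(f)X+\operatorname{Nm}_{F/\Q}^{t-1}(w)$. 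Evaluating the constant term on the Chebotarev-dense set of such $\operatorname{Frob}_w$ identifies $\det\rho_{f,\lambda}$ with $\chi_\ell^{t-1}$. Irreducibility follows from cuspidality of $\pi$: otherwise the semisimplification of $\rho_{f,\lambda}$ would be a sum of two Galois characters and $L(s,f)$ would factor as a product of two Hecke $L$-functions, contradicting cuspidality (via the analytic behaviour of $L(s,f)$, or via Ramanujan-type growth of the $a_w(f)$).

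\emph{Case 2 ($n_F$ even and $\pi_v$ a principal series at every finite place; for instance everywhere unramified of parallel weight $2$, the situation relevant to the rest of this paper).} Now no Shimura curve with a $G_F$-action realising $\rho_{f,\lambda}$ in degree one is available, and one argues by congruences following Taylor. Fix $\lambda\mid\ell$. For each auxiliary prime $\mathfrak{q}$ in a positive-density set, a level-raising argument (Carayol, Ribet) produces a Hilbert eigenform $g_{\mathfrak{q}}$ of level $\mathfrak{N}\mathfrak{q}$, special at $\mathfrak{q}$, with $a_w(g_{\mathfrak{q}})\equiv a_w(f)\pmod{\lambda}$ for $w\nmid\mathfrak{N}\mathfrak{q}\ell$; Case 1 applies to $g_{\mathfrak{q}}$ and produces $\rho_{g_{\mathfrak{q}},\lambda}$. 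As $\mathfrak{q}$ varies, the $\overline{\rho}_{g_{\mathfrak{q}},\lambda}$ share a common trace function on Frobenii, so --- two-dimensional pseudo-representations being determined by their trace --- they glue to a well-defined residual pseudo-representation of $G_F$; running the same argument modulo $\lambda^n$ and patching over a cofinal system of auxiliary data upgrades this to a $\lambda$-adic pseudo-representation with the asserted Frobenius characteristic polynomials, which --- after a finite extension of scalars if needed, and using that $\overline{\rho}_{f,\lambda}$ is irreducible for all but finitely many $\lambda$ (Case 1 applied to the $g_{\mathfrak{q}}$, plus Ramanujan growth of $a_w(f)$) --- comes from an actual representation $\rho_{f,\lambda}\colon G_F\to\Gl_2(K(f)_\lambda)$. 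I expect the main obstacle to lie here: the existence of the level-raised congruent forms $g_{\mathfrak{q}}$ rests on a delicate analysis of the mod-$\lambda$ Hecke algebra, and passing compatibly in $\lambda$ from residual to $\lambda$-adic pseudo-representations requires careful bookkeeping.

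Finally, since the statement concerns only the good primes $w\nmid\mathfrak{N}\operatorname{Nm}_{K(f)/\Q}(\lambda)$, no local--global compatibility at ramified primes is needed and the argument stops here; a uniform alternative to the \'etale construction of Case 1, working for all $n_F$, is the Blasius--Rogawski realisation of $\rho_{f,\lambda}$ in the coherent cohomology of the Hilbert modular variety $Y_F$, combined with a congruence argument to pin down Frobenius.
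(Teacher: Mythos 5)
The paper offers no proof of this statement: it is quoted as a known theorem, with the construction attributed to Blasius--Rogawski, Carayol and Taylor. Your two-case strategy --- Jacquet--Langlands transfer to a quaternionic Shimura curve and extraction of $\rho_{f,\lambda}$ from $H^1_{\mathrm{\acute{e}t}}$ with coefficients in the local system attached to $(\underline{r},t)$, via the Eichler--Shimura congruence relation at good primes, when a discrete-series finite place exists or $n_F$ is odd; and Taylor's level-raising plus pseudo-representation patching argument in the remaining even-degree, everywhere-principal-series case --- is precisely the strategy of those references, so there is nothing to compare against internally. Two small caveats: to pass from ``$\operatorname{Frob}_w$ satisfies the Hecke quadratic'' to ``the characteristic polynomial \emph{equals} the Hecke quadratic'' you also need the determinant computation ($\det\rho_{f,\lambda}=\chi_\ell^{t-1}$, e.g.\ via Poincar\'e duality or the central character), which you invoke only after the fact; and your closing aside misattributes the Blasius--Rogawski construction to \emph{coherent} cohomology of $Y_F$ --- coherent cohomology carries no Galois action, and their realisation goes through \'etale/intersection cohomology and base change to unitary groups. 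Neither point affects the substance of your sketch.
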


Assume that $f$ is of weight $(2,\dots ,2 )$. As in the classical case, we would like to have such Galois representations to be attached to opportune abelian varieties. The existence of abelian varieties associated to $f$ was first considered by Oda in \cite{oda} and Blasius gave a conjectural solution to such problem.
\begin{thm}[Blasius]\label{generalblasius}
Let $f$ be a Hilbert eigenform for $F$ of parallel weight 2. Denote by $K(f)$ the number field generated by the $a_w(f)$ for all $w$. Assume Conjecture \ref{absolutehodge}. There exists a $[K(f):\Q]$-dimensional abelian variety $A_f/F$ with $\Oo_{K(f)}$-multiplication such that for all but finitely many of the finite places $w$ of $F$ at which $A_f$ has good reduction, we have
\begin{displaymath}
L(s,A_f,K(f))=L(f,s),
\end{displaymath}
where the $L$-function $L(s,A_f,K(f))$ is defined by the product of local factors 
\begin{displaymath}
\det(1-\operatorname{Nm}(w)^{-s}\operatorname{Frob}_w|T_v(A)^{I_w}),
\end{displaymath} 
where $v$ is a prime of $K$, $w$ is a prime of $F$ and $w$ and $v$ lay above distinct rational primes.
\end{thm}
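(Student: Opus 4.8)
The plan is to run Blasius's argument \cite{MR2058605}, which rests on the construction of the motive attached to $f$ due to Oda \cite{oda}, Blasius--Rogawski \cite{blasiusrogawski}, Carayol \cite{MR870690} and Taylor \cite{MR1016264}. First one upgrades the Galois representations $\rho_{f,v}$ recalled above to a single rank-$2$ motive $M_f$ over $F$ with coefficients in $K(f)$ (an object of Deligne's category of absolute motives): it is cut out --- by Hecke correspondences, and, when $n_F>1$, by a further idempotent that is a priori only absolute Hodge --- from the degree-$n_F$ cohomology of a Hilbert modular variety of level $\mathfrak{N}$, in such a way that for every finite place $v$ of $K(f)$ the $v$-adic realisation of $M_f$ is $\rho_{f,v}$. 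When $n_F=1$ the ambient cohomology is $H^1$ of a modular curve, already of rank $2$ over $K(f)$, and this is nothing but Shimura's construction (Theorem~\ref{eichlershimurathm}); the feature special to $n_F>1$ is that the $f$-isotypic part of $H^{n_F}$ has rank $2^{n_F}$ over $K(f)$ (morally a tensor product of the Galois conjugates of $\rho_{f,v}$), so a rank-$2$ piece must be singled out --- this needs no new conjecture but is the most involved part of the conjecture-free construction. Since $f$ has parallel weight $2$, the Betti realisation $H^1_{Betti}(M_f,\Q)$ --- for a fixed embedding $F\hookrightarrow\C$ --- is a polarisable $\Q$-Hodge structure of weight $1$ and type $(1,0)+(0,1)$, of dimension $2[K(f):\Q]$ over $\Q$ and carrying an action of $K(f)$; by Riemann's theorem it is $H^1_{Betti}(B_0,\Q)$ for a complex abelian variety $B_0$ of dimension $[K(f):\Q]$ together with an embedding $\Oo_{K(f)}\hookrightarrow\End(B_0)$.

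The crux is to descend $B_0$ from $\C$ to a number field, and then to $F$, compatibly with the $\Oo_{K(f)}$-action; this is where Conjecture~\ref{absolutehodge} enters, exactly as announced before its statement. One realises the Hodge structure $H^1_{Betti}(M_f,\Q)$, up to a Tate twist and an isogeny, as a sub-Hodge structure of the Betti cohomology of the universal abelian scheme over a Picard modular variety $P$: a Shimura variety of PEL type, defined over a number field, which does carry a genuine abelian scheme. Conjecture~\ref{absolutehodge}, applied with $X$ the Picard modular variety $P$ and $Y$ an auxiliary abelian variety over a number field, upgrades this Hodge-theoretic inclusion to one induced by an absolute Hodge class; since absolute Hodge classes are stable under $\Aut(\C)$ and spread out over a number field (\cite[\S6]{MR654325}), $M_f$ thereby becomes an object of Deligne's category of absolute motives generated by abelian varieties over a number field, and, being polarisable of weight $1$, it is $H^1$ of an abelian variety over a number field, isogenous over $\C$ to $B_0$ and carrying the $\Oo_{K(f)}$-multiplication. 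A final Galois-descent argument --- using that $M_f$ is already defined over $F$ and that the entire construction is equivariant for the Hecke action --- brings this abelian variety down to $F$, yielding $A_f/F$ with $\Oo_{K(f)}$-multiplication and a $G_F$-equivariant isomorphism $T_v(A_f)\cong\rho_{f,v}$ for every finite place $v$ of $K(f)$.

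The equality of $L$-functions is then formal: for all but finitely many finite places $w$ of $F$ --- those of good reduction for $A_f$, coprime to $\mathfrak{N}$, and not below $v$ --- the $w$-th Euler factor of $L(s,A_f,K(f))$ is $\det\!\left(1-\operatorname{Nm}(w)^{-s}\operatorname{Frob}_w\mid T_v(A_f)\right)$, and substituting $T_v(A_f)\cong\rho_{f,v}$ together with the characteristic polynomial $X^2-a_w(f)X+\operatorname{Nm}_{F/\Q}(w)$ of $\operatorname{Frob}_w$ on $\rho_{f,v}$ (the case $t=2$ of the formula recalled above, i.e.\ parallel weight $2$) recovers the $w$-th Euler factor of $L(f,s)$.

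The main obstacle is the descent carried out in the second paragraph: manufacturing, out of the a priori purely analytic abelian variety $B_0$, an abelian variety over the number field $F$ with the prescribed $\rho_{f,v}$ as Tate modules. This step cannot be performed unconditionally with present techniques and is the only reason Conjecture~\ref{absolutehodge} is assumed; everything else --- the construction of $M_f$ (including the extraction of a rank-$2$ summand when $n_F>1$, intricate but conjecture-free), the bookkeeping around the Picard modular variety, and the comparison of Euler factors --- is in principle unconditional.
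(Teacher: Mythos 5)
Your overall architecture --- build a rank-$2$ object with $K(f)$-coefficients, read off a weight-one polarisable Hodge structure from parallel weight $2$, invoke Riemann, and use Conjecture~\ref{absolutehodge} to descend the resulting complex abelian variety to a number field --- has the right shape, and your Euler-factor comparison in the last step is fine. For what it is worth, the paper itself offers no argument at all: it simply cites \cite[Theorem 1]{MR2058605} for $K(f)=\Q$ and the remarks in sections 1.10, 5.4, 5.7 and 7.6 of \emph{op.~cit.}\ for the adaptation to a general coefficient field, so any reconstruction has to be measured against Blasius's actual construction, which the paper summarises in the remark immediately following the theorem.

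Against that summary, your first paragraph has a genuine gap, and it is precisely the gap Blasius's argument is designed to circumvent. You cut $M_f$ out of the degree-$n_F$ cohomology of a Hilbert modular variety and assert that extracting a rank-$2$ summand from the $2^{n_F}$-dimensional $f$-isotypic part is ``intricate but conjecture-free''. It is not: that extraction is essentially Oda's conjecture \cite{oda} (a case of the standard/Tate conjectures for Hilbert modular varieties), it is open for $n_F>1$, and an idempotent that is ``a priori only absolute Hodge'' is not known to exist either --- so your claimed unconditional preliminary already contains the whole difficulty. Theorem~\ref{matsu} is the paper's signal that the Hilbert modular variety itself is of no direct use (trivial $H^1$, no Albanese), and Blasius's route never touches its cohomology: he forms the \emph{symmetric square} of the automorphic representation of $\Gl_{2,F}$ attached to $f$, base-changes the resulting $\Gl_{3,F}$-representation to a quadratic imaginary field so that it occurs in the middle cohomology of a Picard modular variety (a PEL Shimura variety carrying honest motives), shows the Betti realisation of the associated motive is the symmetric square of a polarisable Hodge structure of type $(1,0),(0,1)$, extracts that square root, applies Riemann to obtain a complex abelian variety $A$, and only then invokes Conjecture~\ref{absolutehodge} --- applied to the product of the Picard modular variety and $A$ --- to descend $A$ to a number field, finally locating $A_f$ inside the restriction of scalars of $A$ down to $F$. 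Your second paragraph gestures at the Picard modular variety but uses it only as an auxiliary descent device for a motive you have already (illegitimately) constructed on the Hilbert modular side; in Blasius's argument the Picard modular variety is where the motive is constructed in the first place.
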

\begin{proof} If $K(f)=\Q$, this is precisely \cite[Theorem 1 on page 3]{MR2058605}. As noticed by Blasius \cite[1.10]{MR2058605}, the proof easily adapts to the general case (where the necessary changes are hinted in the remarks in section \cite[5.4., 5.7.,  7.6.]{MR2058605}). 
\end{proof}
\begin{rmk}
The proof is completely different from the one of Shimura, since, as noticed in Theorem \ref{matsu}, we can not obtain a non-trivial abelian variety as quotient of the Albanese variety of a Hilbert modular variety. Blasius instead considers the symmetric square of the automorphic representation of $\Gl_{2,F}$ associated to the Hilbert eigenform; it is an automorphic representation of $\Gl_{3,F}$ and its base change to a quadratic imaginary field appears in the middle degree cohomology of a Picard modular variety. He then considers the associated motive and shows that its Betti realisation is the symmetric square of a polarised Hodge structure of type $(1,0), (0,1)$. This gives a complex abelian variety $A$ and Conjecture \ref{absolutehodge} (applied to the product of the Picard modular variety and $A$) allows to conclude that $A$ is defined over a number field containing $F$. He then finds the desired abelian variety inside the restriction of scalars of $A$ over $F$. 
\end{rmk}

\begin{rmk}\label{rmk2}
Theorem \ref{generalblasius} is known to hold unconditionally in many interesting cases (for example when $n_F$ is odd, by the work of Hida). For more details we refer to \cite[Theorem 3]{MR2058605} and references therein. The proof of such unconditional cases actually follows Shimura's proof of Theorem \ref{eichlershimurathm}, rather than the strategy described in the previous remark.
\end{rmk}
\subsection{A remark on polarisations}\label{polar}
To use Theorems \ref{eichlershimurathm} and \ref{generalblasius} to produce $F$-points of a Hilbert modular variety, we need of course the abelian varieties produced to be principally polarised (up to isogeny would actually be enough for our applications, if the isogeny is defined over the base field $F$). An abelian variety over an algebraically closed base field always admits an isogeny to a principally polarised abelian variety. But since the same does not hold over number fields, some considerations are needed. The first observation is that every weight one Hodge structure of dimension $2$ with an action by $\Oo_{K(f)}$ is automatically $\Oo_{K(f)}$-polarised, as explained for example in \cite[Appendix B]{hilbertbas}.

As noticed in \cite[Remark 5.7.]{MR2058605}, Blasius first finds a principally polarised abelian variety $A$ over a finite extension $L'/F$. Actually we can assume that $A$ has a principal $\Oo_{K(f)}$-polarisation $\lambda$. As explained above, the proof considers then the Weil restriction of $A$ to $F$, which is again principally $\Oo_{K(f)}$-polarised. It is not hard to see that the construction of \cite[section 7]{MR2058605} behaves well with respect to the $\Oo_{K(f)}$-polarisation and therefore the proof actually produces an $\Oo_{K(f)}$-polarised abelian variety over $F$.

\section{Producing abelian varieties via Serre's conjecture}\label{sectionserre}
In this section we prove Theorems \ref{main1} and \ref{main2}. As in Section \ref{mainresults}, consider two totally real fields $L$ and $K$. We work with a compatible system of Galois representations of $G_L$ with values in $\Res_{K/\Q}(\Gl_2) (\mathbb{A}^f_\Q)=\Gl_2(\mathbb{A}_{K}^f)$, where $\mathbb{A}_{K}^f$ denotes the finite adeles of $K$, that ``looks like'' an algebraic point of the Hilbert modular variety for $K$. We then produce a Hilbert modular form for $L$ of weight $(2,\dots,2)$, and of opportune explicit conductor. Eventually we obtain an abelian variety over $L$ that allows us to produce an $L$-rational point on the Hilbert modular variety for $K$. 

\subsection{Weakly compatible systems}
The definition of weakly compatible families presented is due to Serre, who called them \emph{strictly compatible} in \cite[Page I-11]{serrebook}. It follows from the Weil conjectures that the $\ell$-adic Tate modules of abelian varieties form a weakly compatible system of Galois representations.

 \begin{defi}[Weakly compatible system]\label{weakcomp}
A system $\{\rho_v:G_L\to \Gl_{2}(K_v)\}_v$ is \emph{weakly compatible} if there exists a finite set of places $S$ of $L$ such that:
\begin{itemize}
\item[(i)] For all places $w$ of $L$, $\rho_v$ is unramified outside the set $S_v$. Here we denote by $S_v$ the union of $S$ and all the primes of $L$ dividing $\ell$ where $\ell$ is the residue characteristic of $K_v$;
\item[(ii)] For all $w \notin S_v$, denoting by $\frw$ a Frobenius element at $w$, the characteristic polynomial of $\rho_v(\frw)$ has $K$-rational coefficients and it is independent of $v$. 
\end{itemize}
\end{defi}
Recall that $\rho_v$ is said to be unramified at a place $w$ of $L$ if the image of the inertia at $w$ is trivial. If $\rho_v$ is attached to the $v$-adic cohomology of a smooth proper variety defined over a number field, the smooth and proper base change theorems, see for example \cite[I, Theorem 5.3.2 and Theorem 4.1.1]{MR463174}, imply that $\rho_v$ is unramified at every place $w\notin S_v$ such that $X$ has good reduction at $w$.

\subsection{Key proposition}\label{keyprop}
Fix a finite set of places $S$ of $L$, containing all the archimedean places. To prove Theorem \ref{main1} and Theorem \ref{main2} we need the following.
\begin{prop}\label{propmodform}
Assume Conjecture \ref{ourserreconj} and let $\{\rho_v\}_v$ be a system of representations satisfying conditions $(\mathcal{S}.1)$-$(\mathcal{S}.4)$. For every $w\not\in S$, let $a_w\in\Oo_K$ be the trace of $\rho_v(\operatorname{Frob}_w)$. Then there exists $f$ a normalised Hilbert eigenform for $L$ with Fourier coefficients in $\Oo_K$, such that for every $w\not\in S$, $a_w(f)=a_w$. Moreover $f$ is of weight $(2,\dots,2)$ and conductor divisible only by primes in $S$. 
\end{prop}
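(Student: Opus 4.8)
The plan is to produce the Hilbert eigenform by extracting, from the given weakly compatible system $\{\rho_v\}_v$, a single residual representation to which Conjecture~\ref{ourserreconj} applies, then lifting back up to a modular form of the correct weight and level and matching Fourier coefficients modulo infinitely many primes. First I would pick a place $v_0$ of $K$, over an odd prime $\ell_0$ not divisible by any prime of $S$, such that $\bar\rho_{v_0}$ is absolutely irreducible---this is possible by $(\mathcal{S}.4)$, after discarding the finitely many bad $v$. I would also arrange, using $(\mathcal{S}.2)$, that $\det\bar\rho_{v_0}$ is the mod $\ell_0$ cyclotomic character, hence odd (indeed totally odd, since at each of the $n_L$ complex conjugations $c$ the cyclotomic character takes the value $-1$). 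Thus $\bar\rho_{v_0}\colon G_L\to\Gl_2(\overline{\Ff}_{\ell_0})$ is irreducible and totally odd, so Conjecture~\ref{ourserreconj} yields a Hilbert eigenform $f_0$ with $\bar\rho_{f_0,\lambda_0}\cong\bar\rho_{v_0}$ for some $\lambda_0\mid\ell_0$ in the Hecke field of $f_0$.

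The next step is to pin down the weight and level of $f_0$, or rather of a companion eigenform with the desired invariants. Here I would invoke the refined (weight and level) part of Serre's conjecture quoted in Remark~\ref{weak} and Theorem~\ref{thmweightcond}: the hypothesis $(\mathcal{S}.3)$ says $\bar\rho_{v_0}$ is finite flat at the primes $w\mid\ell_0$, which forces the Serre weight to be parallel $2$; and since $\bar\rho_{v_0}$ is unramified outside $S_{v_0}$ by weak compatibility, with $\det=\chi_{\ell_0}$, the Serre conductor is supported only on primes in $S$. So one obtains $f_0$ of weight $(2,\dots,2)$ and conductor divisible only by primes of $S$. By Theorem~\ref{eichlershimurathm}/Theorem~\ref{generalblasius}'s input (Carayol--Taylor--Blasius--Rogawski), $f_0$ has an attached compatible system $\{\rho_{f_0,\lambda}\}_\lambda$.

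It then remains to show that $\{\rho_{f_0,\lambda}\}$ and $\{\rho_v\}$ are in fact the \emph{same} compatible system, not merely congruent mod $\ell_0$. The strategy is the standard one: repeat the argument above for infinitely many auxiliary primes $\ell$ (all odd, prime to $S$, and avoiding the finite bad set), producing for each such $\ell$ an eigenform $f_\ell$ of parallel weight $2$ and conductor supported in $S$ with $\bar\rho_{f_\ell,\lambda}\cong\bar\rho_{v}$ for a suitable $v\mid\ell$. The space of Hilbert cusp forms for $L$ of parallel weight $2$ and bounded level is finite-dimensional, and there are only finitely many such eigenforms of conductor dividing a fixed ideal; using this finiteness together with the congruences $a_w(f_\ell)\equiv a_w\pmod{\lambda}$ holding for all $w\notin S$ and infinitely many $\ell$, one concludes that a single eigenform $f$ works, i.e. $a_w(f)=a_w$ in $\Oo_K$ for every $w\notin S$ (and the trace field is contained in $K$ by $(\mathcal{S}.5)$, in fact equal to $K$). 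Concretely: two algebraic integers in $K$ that are congruent modulo infinitely many primes are equal, so matching the \emph{reductions} of $a_w(f_\ell)$ against $a_w$ for enough $\ell$ upgrades to an equality; the finiteness of the form space guarantees the $f_\ell$ stabilise to one $f$.

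The main obstacle I expect is the second step---controlling the weight and level of the modular form produced by the weak conjecture. Weak Serre gives \emph{some} eigenform with the right residual representation, but not directly one of parallel weight $2$ and conductor supported in $S$; bridging this gap requires precisely the results on the weight part of Serre's conjecture over totally real fields (the companion-form and weight-cycling arguments of Gee--Liu--Savitt and Fujiwara, specialised via the finite-flat hypothesis $(\mathcal{S}.3)$) recorded in Theorem~\ref{thmweightcond}, together with the Taylor--Wiles type hypotheses under which those hold. A secondary subtlety is ensuring that the residual representations $\bar\rho_v$ one uses are not only irreducible but that one can simultaneously control determinant and ramification so that the level is exactly supported on $S$; this is where conditions $(\mathcal{S}.1)$ and $(\mathcal{S}.2)$ are used in tandem.
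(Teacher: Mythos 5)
Your overall architecture is the same as the paper's: apply weak Serre to $\bar\rho_v$ for infinitely many $v$, use Theorem \ref{thmweightcond} (via finite flatness and the Taylor--Wiles condition) to force parallel weight $2$ and to control the level, then use finite-dimensionality of the relevant space of forms plus congruences modulo infinitely many primes to pin down a single $f$ with $a_w(f)=a_w$. However, there is one concrete gap in the middle step. For the pigeonhole argument you need the conductors of the $\bar\rho_v$ to divide a \emph{fixed} ideal, and you assert this ("conductor dividing a fixed ideal", "bounded level"), but what you actually justify is only that the conductor is \emph{supported} on the primes of $S$ (from unramifiedness outside $S_v$). Support in $S$ does not bound the exponents: a priori the local conductor exponent at $w\in S$ could grow with $v$, since the image of wild inertia under $\bar\rho_v$ lives in a group of order depending on $\ell$. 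The paper closes this by invoking Serre's bound $n(r,L_w)\leq 2\bigl(1+e_{L_w}\cdot c+\tfrac{e_{L_w}}{p-1}\bigr)$ (Proposition \ref{conductorprop}) and, crucially, by restricting to residue characteristics $\ell$ satisfying the congruences (\ref{congruences}), which force the image of wild inertia to be trivial (resp.\ of order at most $p^5$, $p$) for $p\neq 2,3$ (resp.\ $p=2$, $p=3$); this yields the explicit uniform ideal $\mathfrak{C}$. Without some such argument your finiteness step does not go through, so you should add it.

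A secondary point: you mention the Taylor--Wiles hypothesis only in passing. Since Theorem \ref{thmweightcond} requires $(\bar\rho_v)_{\mid G_{L(\zeta_\ell)}}$ irreducible, you need to know this holds for infinitely many of the $v$ you use; the paper cites \cite[Proposition 5.3.2]{MR3152941} for a density-one set of such primes. This is easy to patch but should be stated, as otherwise the weight-and-level refinement you rely on is not available.
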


\begin{rmk}
If we start with an abelian variety $A/L$ with $\Oo_K$-multiplication, we can produce a system 
\begin{displaymath}
\rho_v: G_L \to \Gl(T_v(A))
\end{displaymath}
which satisfies the four conditions of (\ref{conditions}) for $S$ the union of infinite places and the set of places of bad reduction, see \cite[$\S$3]{ribet}. Proposition \ref{propmodform} hence implies that $A$ is modular, i.e. there exists a Hilbert modular form for the totally real field $L$ such that
\begin{displaymath}
L(A/L,s)=L(f,s)
\end{displaymath}
up to a finite number of Euler factors. Unconditionally, it has been proven that elliptic curves over real quadratic fields are modular (see \cite{realquadrmod}) and, more in general, the work of Taylor and Kisin implies that elliptic curves over $L$ become modular (in this sense) after a totally real extension $L'/L$. See \cite[Theorem 1.16]{MR2905534} and reference therein.
\end{rmk}

In the proof of Proposition \ref{propmodform} we use Conjecture \ref{ourserreconj}, and the following result due to Serre (for the proof see \cite[4.9.4]{serresur}).
\begin{prop}[Serre]\label{conductorprop}
Let $q$ be a power of $\ell$. Let $r: G_E \to \Gl_2(\Ff_q)$ a continuous homomorphism, where $q=\ell^t$ and $E$ is a local field of residue characteristic $p\neq \ell$ and discrete valuation $v_E$. Let $e_E:=v_E(p)$ and $c\geq 0$ be an integer such that the image via $r$ of the wild inertia of $E$ has cardinality $p^c$. We denote by $n(r,E)$ the exponent of the conductor of $r$. We have
\begin{displaymath}
n(r,E)\leq 2\left(1+e_E\cdot c + \tfrac{e_E}{p-1}\right).
\end{displaymath}
\end{prop}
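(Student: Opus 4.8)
The plan is to split the conductor exponent into its tame and wild (Swan) parts and bound each separately, concentrating all of the difficulty in controlling the wild ramification produced by the $p$-group $r(P_E)$, where $P_E \subseteq I_E \subseteq G_E$ denote the wild inertia and inertia subgroups. Writing $V = \Ff_q^2$ for the representation space and $\{G_i\}_{i\ge 0}$ for the ramification groups of the (finite) image of $r$ in lower numbering, one has the standard decomposition
\[
n(r,E) = \dim(V/V^{I_E}) + \operatorname{Sw}(V),
\]
where the first term is the tame contribution and $\operatorname{Sw}(V)=\sum_{i\ge 1}\tfrac{|G_i|}{|G_0|}\dim(V/V^{G_i})$ is the Swan conductor. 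Since $\dim V = 2$, the tame term is at most $2$; this accounts for the summand ``$2\cdot 1$'' in the claimed bound, so it remains to prove $\operatorname{Sw}(V) \le 2\bigl(e_E c + \tfrac{e_E}{p-1}\bigr)$.

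For the Swan conductor I would pass to the upper-numbering filtration. Because $p\neq \ell$, the restriction $V|_{P_E}$ is semisimple, and the break (slope) decomposition $V = \bigoplus_\lambda V_\lambda$ expresses $\operatorname{Sw}(V) = \sum_\lambda \lambda \dim V_\lambda$, the sum running over the upper-numbering breaks $\lambda\ge 0$ (the tame part sitting at $\lambda=0$). Since $\sum_\lambda \dim V_\lambda = \dim V = 2$, this gives at once $\operatorname{Sw}(V)\le 2\,b^\ast$, where $b^\ast$ is the largest break, i.e. the largest $\lambda$ with $r(G_E^{\,\lambda})\neq 1$. Thus the whole statement reduces to the purely ramification-theoretic estimate
\[
b^\ast \le e_E c + \frac{e_E}{p-1}.
\]

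To prove this I would induct on $c$. The base case $c=1$ is the classical bound $b^\ast \le \tfrac{p e_E}{p-1} = e_E + \tfrac{e_E}{p-1}$ for the unique ramification break of a totally wildly ramified $\ZZ/p$-extension of a local field $E$ with $v_E(p)=e_E$. For the inductive step, replace $M$ by the totally ramified extension cut out by $r$ (the unramified part is irrelevant for inertia) and set $W:=r(P_E)\subseteq \Gal(M/E)$, so $|W|=p^c$. I would choose $Z\cong\ZZ/p$ inside the last nontrivial (hence central) ramification group $G_{u^\ast}$, where $u^\ast$ is the largest lower-numbering break. Since lower numbering is compatible with subgroups and the Herbrand functions satisfy $\phi_{M/E}=\phi_{M^Z/E}\circ\phi_{M/M^Z}$, the largest upper break of $M/E$ equals $\phi_{M^Z/E}(u^\ast)$, where $u^\ast$ is exactly the break of the bottom layer $M/M^Z$, bounded by $\tfrac{p\,e_{M^Z}}{p-1}$ via the base case; meanwhile the quotient $M^Z/E$ has wild part of order $p^{c-1}$, to which the inductive hypothesis applies.

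The main obstacle is precisely this Herbrand transition: one must verify that applying $\phi_{M^Z/E}$ to $\tfrac{p\,e_{M^Z}}{p-1}$ raises the $(c-1)$-layer bound by at most $e_E$, so that breaks accumulate exactly as $e_E c + \tfrac{e_E}{p-1}$. Concretely, beyond all breaks of $M^Z/E$ the function $\phi_{M^Z/E}$ has slope $1/[M^Z:E]_{\mathrm{ram}}$ and $e_{M^Z}=[M^Z:E]_{\mathrm{ram}}\,e_E$, so the leading term $\tfrac{p\,e_{M^Z}}{p-1}$ compresses back to $\tfrac{p\,e_E}{p-1}$, while the corrections from the $c-1$ interior breaks supply the remaining $e_E(c-1)$; here the tame ramification index cancels against the normalisation of $\phi_{M^Z/E}$, which is why the final answer depends only on $e_E$, $p$ and $c$ and not on the tame part. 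This calibration can be cross-checked on the cyclotomic tower $\QQ_p(\zeta_{p^{n}})/\QQ_p$, where $e_E=1$, $c=n-1$, and the upper breaks occur precisely at $1,2,\dots,n-1$. Assembling the three steps yields $n(r,E)\le 2 + 2b^\ast \le 2\bigl(1 + e_E c + \tfrac{e_E}{p-1}\bigr)$, as desired.
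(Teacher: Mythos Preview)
The paper does not give its own proof of this proposition; it simply refers the reader to Serre \cite[4.9.4]{serresur}. Your overall architecture---separate the tame part (at most $2$), bound the Swan conductor by $2b^\ast$ where $b^\ast$ is the largest upper-numbering break, and then control $b^\ast$ by induction on $c$ using the classical bound $\beta\le p\,e_F/(p-1)$ for a single $\ZZ/p$-layer---is exactly right and is essentially Serre's route.

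The inductive step, however, does not close as you have written it. You correctly reduce to $b^\ast=\phi_{M^Z/E}(u^\ast)$ with $u^\ast\le p\,e_{M^Z}/(p-1)$ and correctly flag the Herbrand transition as the crux, but the heuristic ``the leading term compresses back to $p\,e_E/(p-1)$ while the $c-1$ interior breaks supply $e_E(c-1)$'' does not follow from the inductive hypothesis $b^\ast_{M^Z/E}\le e_E\bigl(c-1+\tfrac{1}{p-1}\bigr)$ alone. Writing $u^{\ast\ast}$ for the last lower break of $M^Z/E$, your ``correction'' is precisely $b^\ast_{M^Z/E}-u^{\ast\ast}/p^{c-1}$; with no lower bound on $u^{\ast\ast}$ the best one obtains is $b^\ast\le e_E\bigl(c+\tfrac{2}{p-1}\bigr)$, off by $e_E/(p-1)$. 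The cyclotomic cross-check does not see this, because there the bound on $b^\ast_{M^Z/E}$ happens not to be sharp.

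There are two clean repairs. Either strengthen the inductive statement to $b^\ast\le c\,e_E+u^\ast/|G_0|$, which is equivalent to the different bound $v_M(\mathfrak d_{M/E})\le c\,p^c e_E+p^c-1$ and follows immediately from transitivity of the different along any tower of $\ZZ/p$-steps together with your base case; then $u^\ast/|G_0|\le e_E/(p-1)$ finishes. Or---more transparently---induct from the bottom rather than the top: take $N\lhd G$ of index $p$ and set $F=M^N$. Then either $b^\ast_{M/E}\le\beta_1\le p\,e_E/(p-1)$ already, or $b^\ast_{M/E}=\phi_{F/E}(b^\ast_{M/F})=\beta_1\tfrac{p-1}{p}+\tfrac{1}{p}\,b^\ast_{M/F}$, and inserting $\beta_1\le p\,e_E/(p-1)$ and the inductive bound $b^\ast_{M/F}\le p\,e_E\bigl(c-1+\tfrac{1}{p-1}\bigr)$ yields exactly $e_E\bigl(c+\tfrac{1}{p-1}\bigr)$.
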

We also need to compute the weight and the conductor of the modular forms produced by Conjecture \ref{ourserreconj}. As anticipated in Remark \ref{weak}, this is a known result under some assumptions. The weight part stated in the following theorem is a special case of the work \cite{geeliusavitt}; the conductor part follows from automorphy lifting methods or can be seen as a consequence of the main theorem of \cite{fuji}.

\begin{thm}[\cite{geeliusavitt,fuji}]\label{thmweightcond} Let $\ell>5$ and $\bar{\rho}: G_L\to \Gl_2(\overline{\mathbb{F}}_{\ell})$ be an irreducible totally odd representation such that its determinant is the cyclotomic character and it is finite flat at all places $w\mid\ell$. Assume furthermore that $\bar{\rho}$ satisfies the Taylor--Wiles assumption, namely
 \begin{equation}\label{taylorwiles}
\tag{TW}
\bar{\rho}_{\mid G_{L(\zeta_{\ell})}} \text{ is irreducible},
\end{equation}
where $\zeta_{\ell}$ is a primitive $\ell$th root of unity. Then if $\bar{\rho}$ is modular, there exists a Hilbert modular form of parallel weight 2 and conductor equal to the Artin conductor of $\bar{\rho}$ giving rise to $\bar{\rho}$.
\end{thm}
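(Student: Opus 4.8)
The plan is to prove the two assertions --- parallel weight $2$ and conductor equal to the Artin conductor --- separately, deducing the weight from the geometric form of the Serre weight conjecture and the conductor from level-lowering for Hilbert modular forms. Both inputs hold under the running hypotheses $\ell>5$ and \eqref{taylorwiles}, and both ultimately rest on the Taylor--Wiles--Kisin patching method, for which \eqref{taylorwiles} is precisely the required largeness condition on the residual image.

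\emph{The weight.} Since $\bar{\rho}$ is assumed modular, the set of Serre weights in which it is modular is non-empty. The theorem of Gee--Liu--Savitt \cite{geeliusavitt} identifies this set with the set of weights predicted by the local restrictions $\bar{\rho}|_{G_{L_w}}$ for the places $w\mid \ell$. I would then observe that the hypothesis ``$\bar{\rho}$ is finite flat at every $w\mid\ell$'' is exactly the statement that each local restriction admits a Barsotti--Tate (equivalently Fontaine--Laffaille of Hodge--Tate weights $\{0,1\}$) lift, which is the local condition attaching the trivial Serre weight --- corresponding to parallel weight $2$ --- to $\bar{\rho}$. By the equality of predicted and modular weight sets, this forces the existence of a parallel weight $2$ Hilbert eigenform $g$ giving rise to $\bar{\rho}$, of some level $\mathfrak{M}$ a priori divisible by more primes (and to higher powers) than the Artin conductor $N(\bar{\rho})$.

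\emph{The conductor.} Starting from $g$, I would lower the level to $N(\bar{\rho})$. Concretely, I would set up the global deformation problem for $\bar{\rho}$ imposing the Barsotti--Tate condition at each $w\mid\ell$ and the minimal local condition --- matching the $w$-part of $N(\bar{\rho})$ prescribed by $\bar{\rho}|_{G_{L_w}}$ --- at every finite place $w\nmid\ell$, and invoke the $R=T$ theorem of Fujiwara \cite{fuji} (the Taylor--Wiles--Kisin argument in the totally real case). Modularity of $\bar{\rho}$ guarantees that the corresponding minimal deformation ring is non-zero; any characteristic-zero point of it then yields a Hilbert modular form of parallel weight $2$ whose conductor is, by construction, exactly $N(\bar{\rho})$. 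Since the weight is parallel $2$, the $\ell$-part of the conductor is trivial, so $N(\bar{\rho})$ coincides with the Artin conductor, as claimed.

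The main obstacle is the conductor step. The weight part is, at this point, a clean appeal to \cite{geeliusavitt} once the finite-flat hypothesis is matched to the trivial Serre weight. Level-lowering, by contrast, requires the full strength of automorphy lifting: the delicate point is the local deformation-theoretic analysis at the ramified primes $w\nmid\ell$ dividing $\mathfrak{M}$ but not $N(\bar{\rho})$, where one must check that the minimal local condition can be imposed without annihilating the patched Hecke module. It is here that \eqref{taylorwiles} and the restriction $\ell>5$ are genuinely used --- the former to run the Taylor--Wiles patching at all, the latter to avoid the small residual image cases in which the method breaks down.
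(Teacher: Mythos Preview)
Your proposal is correct and matches the paper's own treatment: the paper does not give a proof but simply attributes the weight part to \cite{geeliusavitt} and the conductor part to automorphy lifting methods or the main theorem of \cite{fuji}, exactly the two-step decomposition you describe. Your sketch supplies more detail than the paper does, but the approach is identical.
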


We now ready to prove Proposition \ref{propmodform}.
\begin{proof}[Proof of Proposition \ref{propmodform}]
Our goal is to apply Serre's conjecture to $\bar{\rho}_v$, the reduction modulo $v$ of the representation $\rho_v$, for infinitely many $v\notin S_K$, where $S_K$ is the following finite set of primes of $K$ 
\begin{displaymath}
S_K=\{ v: \ v\mid \ell \text{ and } w\mid \ell \text{ for some }w\in S \text{ or $\ell$ is ramified in $L$}\}.
\end{displaymath} 
Let $v$ be such a prime, let $\ell$ be its residue characteristic. 

We first compute the conductor $N(\bar{\rho}_v)$. Since $\bar{\rho}_v$ is unramified outside $S_v$, the conductor is divisible only by primes in $S$. We then apply Proposition \ref{conductorprop} to $E=L_w$ and $r=\bar{\rho}_v$. The image of $\bar{\rho}_v$ s contained in $\Gl_2(\Ff_{\ell^t})$, where $t\leq [K:\Q]=n_K$. The cardinality of this group is $\ell^t(\ell^{2t}-1)(\ell^t-1)$. Let $W_w$ denote the wild inertia subgroup of $G_{L_w}$. If $\ell$ satisfies the following congruences 
\begin{equation}\label{congruences}
\tag{$\star$}
\ell^{n_K} \not\equiv \begin{cases}
\pm 1 \mod p &\text{ if } p\neq 2,3\\
\pm 1 \mod 8 &\text{ if } p=2\\
\pm 1, 4,7 \mod 9 &\text{ if } p=3,
\end{cases}
\end{equation}
then the same congruences hold for $\ell^t$ and hence $\bar{\rho}_v(W_w)$ is trivial if $p\neq 2,3$ and is at most $p^{5}$ if $p=2$ and at most $p$ if $p=3$. Hence for $v\notin S$ laying above $\ell$ satisfying the above conditions, using that $e_E\leq [L:\Q]=n_L$, the inequality of Proposition \ref{conductorprop} implies
\begin{displaymath}
n_K(\bar{\rho}_v,L_w)\leq \begin{cases}
2(1+n_L) &\text{ if } p\neq 2,3\\
2(1+6n_L) &\text{ if } p=2\\
2(1+2n_L) &\text{ if } p=3.
\end{cases}
\end{displaymath}
Writing, $\mathfrak{p}_w$ for the prime ideal of $L$ corresponding to $w$, we hence find that the conductor of $\bar{\rho}_v$ divides 
\begin{displaymath}
\mathfrak{C}:= \prod_{\substack{w\in S,\\ w\nmid 2,3}} \mathfrak{p}_w^{2+2n_L}\cdot \prod_{\substack{w\in S,\\ w\mid 2}}\mathfrak{p}_w^{2+12n_L}\cdot \prod_{\substack{w\in S,\\ w\mid 3}}\mathfrak{p}_w^{2+4n_L}.
\end{displaymath}
Finally, notice that $\rho_v$ is odd thanks to the condition on the determinant and, moreover, \cite[Proposition 5.3.2]{MR3152941} implies that there exists a density one set of primes such that $(\bar{\rho}_v)_{\mid G_{L(\zeta_{\ell})}}$ is irreducible, i.e. (\ref{taylorwiles}) is satisfied.

We can apply Conjecture \ref{ourserreconj} to $\bar{\rho}_v$ for $v\in \Sigma$, where $\Sigma$ is the infinite set of primes $v\mid \ell$ such that $v\not\in S_K$, $\ell$ satisfies (\ref{congruences}), $\bar{\rho}_v$ is absolutely irreducible and satisfies (\ref{taylorwiles}). We have produced infinitely many $f_v$ Hilbert modular eigenform, which by Theorem \ref{thmweightcond} are of parallel weight 2 and level dividing $\mathfrak{C}$. Their Fourier coefficients are defined over a ring $\Oo(v)\subset \Oo_K$ and at a prime $\lambda\mid v$ the associated Galois representation $\rho_{f_v,\lambda}$ is isomorphic to $\bar{\rho}_v$ modulo $\lambda$. Since the space of Hilbert modular form of fixed weight and with conductor dividing $\mathfrak{C}$ is finite dimensional (see \cite[Theorem 6.1]{freitag}), we can find at least one Hilbert modular eigenform $f$ of parallel weight 2 and level dividing $\mathfrak{C}$ defined over some $\Oo\subset\Oo_K$ such that for infinitely many of the $v$ above the same property holds for $\rho_{f,\lambda}$ for $\lambda\mid v$. This implies that for all $w\not\in S$ the congruence
\begin{displaymath}
a_w(f)\equiv a_w \mod \lambda\mid v
\end{displaymath}
holds for infinitely many primes $\lambda$ and hence $a_w(f)= a_w$, as required. 
\end{proof}

\subsection{Proof of Theorem \ref{main}}
Recall that, as in Section \ref{mainresults}, the system $(\rho_v)_v$ is required to satisfy the following additional property:
\begin{equation}\label{allK}
\tag{$\mathcal{S}.5$} 
\text{the field generated by $a_w$ for every $w$ is exactly $K$}. 
\end{equation}
In other words, we have $K(f)=K$, where $f$ is the Hilbert modular form for $L$ produced in Proposition \ref{propmodform}.

\begin{proof}[Proof of Theorem \ref{main}] Starting with our initial datum of Galois representations, we have produced a Hilbert modular form $f$ for $L$. We can then apply Theorem \ref{generalblasius}, which gives an abelian variety $A_f$ over $L$ of dimension $[K:\Q]$ and an embedding of $\Oo_K$ into $\End(A)$. For all but finitely many $w\mid p$ at which $A_f$ has good reduction 
\begin{displaymath}
\det(1-X\rho_{A_f,v}(\fr_w))=1-a_w(f)X+N_wX^2,
\end{displaymath}
where $v$ is a finite prime of $K$ not dividing $p$ and $\rho_{A_f,v}$ is the $G_{L}$-representation on $T_v(A_f)$, the $v$-adic Tate module of $A_f$. We therefore have produced an abelian variety $A_f$ as stated in Theorem \ref{main1} and \ref{main2}.

We just need to stress that we don't require any conjectural statement in the case $n_L=1$. Indeed we can use Theorem \ref{eichlershimurathm} in place of Blasius' conjectural version, and Serre's conjecture is fully known thanks to the work of Khare--Wintenberger \cite[Theorem 1.2]{MR2480604}.
\end{proof}

\subsection{A corollary}
We rephrase the main results of the section as needed to prove Theorem \ref{maincor}.
\begin{cor}\label{corfordescent}
Assume that, in the setting of Theorems \ref{main1} and \ref{main2}, we also have a representation
\begin{displaymath}
\rho : G_L\to \Gl_2(\Oo_K/\mathfrak{N}),
\end{displaymath}
for some integral ideal $\mathfrak{N}\subset \Oo_K$, such that for all pairs $(v, a)$, where $v$ is a place of $K$ and $a\in \N-\{0\}$, such that $v^a$ divides $\mathfrak{N}$, the reductions of $\rho$ and $\rho_v$ modulo $v^a$ agree. Then there exists a $n_K$-dimensional abelian variety $A/L$ with good reduction at all $w$ outside $S$ and the action of $G_L$ on $A[\mathfrak{N}]$ is given by $\rho$.
\end{cor}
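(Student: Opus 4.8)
The plan is to combine Theorem \ref{main} (which produces an abelian variety $A/L$ with $\Oo_K$-multiplication whose $v$-adic Tate modules realise the $\rho_v$) with the compatibility hypothesis relating $\rho$ to the family $\{\rho_v\}_v$ modulo the prime-power components of $\mathfrak{N}$. First I would invoke Theorem \ref{main1} (if $n_L=1$) or Theorem \ref{main2} (if $n_L>1$, under the stated conjectures) to obtain an $n_K$-dimensional abelian variety $A/L$ with a ring embedding $\Oo_K\hookrightarrow\End(A)$ such that $T_vA\cong\rho_v$ as $G_L$-representations for every finite place $v$ of $K$. By the discussion of Proposition \ref{propmodform} and the construction via Blasius' theorem (Theorem \ref{generalblasius}), the associated Hilbert modular form $f$ has conductor divisible only by primes in $S$, so $A$ has good reduction at every place $w$ of $L$ outside $S$ (after possibly replacing $A$ by an $L$-isogenous abelian variety with the right polarisation, as in Section \ref{polar}); this settles the good reduction claim.

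Next I would extract from the system $\{T_vA\}_v$ the Galois action on the finite group scheme $A[\mathfrak{N}]$. Writing $\mathfrak{N}=\prod_i v_i^{a_i}$ for the factorisation of $\mathfrak{N}$ into powers of distinct finite primes $v_i$ of $K$, the Chinese Remainder Theorem gives a $G_L$-equivariant decomposition $A[\mathfrak{N}]\cong\bigoplus_i A[v_i^{a_i}]$, and each $A[v_i^{a_i}]$ is canonically identified with $T_{v_i}A/v_i^{a_i}T_{v_i}A\cong(\Oo_{K_{v_i}}/v_i^{a_i})^2$, with $G_L$ acting through the reduction of $\rho_{v_i}$ modulo $v_i^{a_i}$. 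The hypothesis is precisely that this reduction agrees with $\rho\bmod v_i^{a_i}:G_L\to\Gl_2(\Oo_K/v_i^{a_i})$ for each pair $(v_i,a_i)$. Reassembling over $i$ via CRT one more time, the $G_L$-action on $A[\mathfrak{N}]\cong(\Oo_K/\mathfrak{N})^2$ is exactly $\rho$, which is what is required.

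The main subtlety — more a bookkeeping point than a genuine obstacle — is to make sure the isomorphism $T_vA\cong\rho_v$ furnished by Theorem \ref{main} is an honest isomorphism of $\Oo_{K_v}$-modules with $G_L$-action (not merely of $\Ql$-representations), so that reducing modulo $v^a$ makes sense and lands in $\Gl_2(\Oo_K/v^a)$ rather than in some $\Gl_2$ of a localisation; this is built into the Eichler--Shimura/Blasius construction since $A$ carries a genuine $\Oo_K$-action, so $T_vA$ is free of rank $2$ over $\Oo_{K_v}$. One should also note that $\rho$ a priori takes values in $\Gl_2(\Oo_K/\mathfrak{N})$ only up to conjugacy, but the agreement of the mod-$v^a$ reductions for all $(v,a)$ with $v^a\mid\mathfrak{N}$ pins down the $G_L$-module structure on $A[\mathfrak{N}]$ uniquely, which is all that the statement asserts. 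Since all of this is formal once Theorem \ref{main} is in hand, the corollary follows.
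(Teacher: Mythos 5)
Your proposal is correct and follows essentially the same route as the paper: apply Theorem \ref{main1} or \ref{main2} to get $A/L$ with $\Oo_K$-multiplication and $T_vA\cong\rho_v$, then read off the action on $A[\mathfrak{N}]$ from the hypothesis that $\rho$ and $\rho_v$ agree modulo $v^a$. The one place you diverge is the good-reduction claim: you derive it from the conductor of the Hilbert modular form $f$ being supported on $S$, which implicitly requires a local--global compatibility between the level of $f$ and the conductor of $A_f$ that Theorem \ref{generalblasius} does not explicitly supply (it only controls all but finitely many Euler factors). The paper instead applies the N\'eron--Ogg--Shafarevich criterion directly: for $w\notin S$ choose $v$ of residue characteristic prime to that of $w$; then $\rho_v\cong T_vA$ is unramified at $w$ by condition $(\mathcal{S}.1)$, so $A$ has good reduction at $w$. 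This is both shorter and avoids the conductor compatibility, so you should substitute it for your second paragraph's opening step; the rest of your argument (the CRT decomposition of $A[\mathfrak{N}]$ and the remark that $T_vA$ is genuinely free of rank $2$ over $\Oo_{K_v}$) is fine and merely spells out what the paper states in one line.
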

\begin{proof}
Theorems \ref{main1} and \ref{main2} give an $n_K$-dimensional abelian variety $A/L$ and, using the Néron-Ogg-Shafarevich criterion, we can see that it has good reduction at all $w$ outside $S$. Finally $G_L$ acts on $A[\mathfrak{N}]$ via $\rho$, since the reduction modulo $v^a$ of $\rho$ and $\rho_v$ agree.
\end{proof}

\section{Finite descent obstruction and proof of Theorem \ref{maincor}}\label{finitesection}
In this final section we recall the finite descent obstruction for integral points, explaining how it relates to the system of Galois representations considered in the previous section. Using Theorem \ref{main}, we indeed produce an $\Oo_{L,S}$-point of integral models of twists of Hilbert modular varieties, which is what we require to prove Theorem \ref{maincor}.
\subsection{Recap on the integral finite descent obstruction}\label{recapobs}
Let $Y/F$ be a smooth, geometrically connected variety (not necessarily proper) over a number field $F$. Let $S$ be a finite set of places of $F$ and, as before, assume that $S$ contains the archimedean places and all places of bad reduction of $Y$. Choose and fix a smooth model $\mathcal{Y}$ of $Y$ over $\Oo_{F,S}$. In this section we recall the definition of the set $\mathcal{Y}^{f-cov}$, which will correspond to the adelic points of $\mathcal{Y}$ that are unobstructed by all Galois covers. To make the paper self contained we recall the discussion from \cite[Section 2]{helmvoloch} (where the authors work with affine curves). We then recall that, for Hilbert modular varieties, a point unobstructed by finite covers admits an infinite tower of twists of covers with a compatible system of lifts of adelic points along the tower (following \cite[Proposition 1]{helmvoloch}).

Let $\pi: \mathcal{X} \to \mathcal{Y}$ be a map of $\Oo_{F,S}$-schemes, such that it becomes a Galois covering over $\overline{F}$. Such map is called geometrical Galois cover of $\mathcal{Y}$. Denote by $\Tw(\pi)$ the set of isomorphism classes of twists of $\pi$, i.e. of maps $\pi':\mathcal{X}'\to \mathcal{Y}$ that become isomorphic to $\pi$ over $\overline{F}$. We have
\begin{displaymath}
\mathcal{Y}(\Oo_{F,S})= \bigcup_{\pi ' \in \Tw_0(\pi)} \pi' \left(\mathcal{X}'(\Oo_{F,S})\right),
\end{displaymath}
where $\Tw_0(\pi)$ is a suitable finite subset of $\Tw(\pi)$ (for a more detailed discussion we refer to \cite[Page 105 and 106]{MR1845760}), and $\pi':\mathcal{X}'\to \mathcal{Y}$ is a twist of $\pi$. In what follows, $w$ denotes a place of $F$.

\begin{defi}
We define $\mathcal{Y}^{f-cov}(\Oo_{F,S})=\mathcal{Y}^{f-cov}$ as the set of $ (P_w)_w \in \prod _{w \notin S} \mathcal{Y}(\Oo_{F_w})$ such that, for each geometrical Galois cover $\pi$, we can write
\begin{displaymath}
P_w= \pi' (Q_w), \ \ \ \forall w \notin S
\end{displaymath}
for some  $\pi ' \in \Tw_0(\pi)$ and $(Q_w)_w\in \prod _{w \notin S} \mathcal{X}'(\Oo_{F_w})$.
\end{defi}
\begin{prop}\label{equivalence}
A point $(P_w)_w $ lies in $\mathcal{Y}^{f-cov}$ if and only if, for each geometrical Galois cover $\pi: \mathcal{X} \to \mathcal{Y} $, we can choose a twist $\pi': \mathcal{X}'\to \mathcal{Y}$ and a point $(P_w)_\pi \in \prod_{w\notin S}\mathcal{X}'(\Oo_{F_w})$ lifting $(P_w)_w$ in a compatible way (i.e. if $\pi_1 , \pi_2$ are Galois covers and $\pi_2$ dominates $\pi_1$, then $\pi_2'$ dominates $\pi_1'$ and $(P_w)_{\pi_2'}$ maps to $(P_w)_{\pi_1'}$). 
\end{prop}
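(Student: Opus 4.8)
The plan is to prove both implications by reducing to the case of a single geometrical Galois cover and then performing a standard ``compatibility across the tower'' argument; the content is essentially formal once one knows how twists behave under domination.

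First I would prove the easy direction. Suppose $(P_w)_w$ lies in $\mathcal{Y}^{f-cov}$ in the sense of the preceding definition. The claim is that the lifts $(Q_w)_w$ produced there can be organised compatibly. So fix a cofinal tower of Galois covers $\cdots \to \mathcal{X}_2 \to \mathcal{X}_1 \to \mathcal{Y}$; it suffices to construct compatible twists and lifts along any such tower, and then to check that the choice is independent (up to the required compatibility) of the tower, which follows because any two Galois covers are dominated by a common third one. By definition of $\mathcal{Y}^{f-cov}$, for each $\pi_i$ there is a twist $\pi_i'$ and a local lift; the point is that once $\pi_{i+1}'$ is chosen dominating $\pi_i'$, the lift $(P_w)_{\pi_{i+1}'}$ automatically maps to a lift of $(P_w)_w$ through $\pi_i'$, because the image of a lift through the composite $\mathcal{X}_{i+1}' \to \mathcal{X}_i' \to \mathcal{Y}$ is the image through $\mathcal{X}_i' \to \mathcal{Y}$ of its image in $\mathcal{X}_i'$. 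The only subtlety is the existence of a twist $\pi_{i+1}'$ dominating the already-chosen $\pi_i'$ and still carrying a local point over $(P_w)_w$: here one uses that $(P_w)_w \in \mathcal{Y}^{f-cov}$ gives \emph{some} twist $\pi_{i+1}''$ with a lift, and then compares $\pi_{i+1}''$ with $\pi_i'$ — the cover $\mathcal{X}_{i+1}'' \times_{\mathcal{Y}} \mathcal{X}_i'$ (a suitable connected component, or the twist of $\pi_{i+1}$ determined by the cocycle of $\pi_i'$) does the job. Constructing the tower inductively and invoking K\"onig's lemma / a compactness argument on the (finite, by $\Tw_0$) sets of admissible twists at each level then yields a genuinely compatible system of twists $\{\pi_i'\}$ and lifts $\{(P_w)_{\pi_i'}\}$; extending from a cofinal tower to all Galois covers is then purely formal.

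For the converse, suppose that for every geometrical Galois cover $\pi$ one can choose a twist $\pi'$ and a compatible local lift $(P_w)_\pi$. Then in particular, forgetting the compatibility, for each individual $\pi$ we have a twist $\pi' \in \Tw_0(\pi)$ and a point $(Q_w)_w = (P_w)_\pi \in \prod_{w \notin S} \mathcal{X}'(\mathcal{O}_{F_w})$ with $P_w = \pi'(Q_w)$ for all $w \notin S$. That is literally the defining condition for $(P_w)_w \in \mathcal{Y}^{f-cov}$, so this direction is immediate. (One should note that the twist produced abstractly lies in $\Tw_0(\pi)$, not merely in $\Tw(\pi)$: this is forced because a local lift exists at every $w \notin S$, and by construction $\Tw_0(\pi)$ is precisely the finite set of twists that can contribute local/integral points, cf. the displayed decomposition of $\mathcal{Y}(\mathcal{O}_{F,S})$ and the discussion in \cite{MR1845760}.)

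The main obstacle is the forward direction, specifically arranging the twists to be compatible rather than just existing level-by-level. The delicate point is that when one passes from $\pi_i'$ to a dominating cover, the \emph{twist class} is not free to choose: the cocycle twisting $\pi_i$ into $\pi_i'$ must be the image of the cocycle twisting $\pi_{i+1}$ into $\pi_{i+1}'$ under the natural map on cohomology induced by the domination $\pi_{i+1} \to \pi_i$. So one cannot independently optimise at each level; instead one runs the inductive construction so that at stage $i+1$ one only considers twists $\pi_{i+1}'$ whose associated cocycle restricts to the one already fixed at stage $i$, checks that the hypothesis still provides at least one such twist with a local lift (using that $\mathcal{Y}^{f-cov}$ membership is about \emph{all} covers simultaneously, so in particular about the fibre product covers), and then applies the finiteness of $\Tw_0$ together with an inverse-limit/compactness argument to pass from all finite levels to the full tower. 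Once this bookkeeping is in place, the equivalence drops out.
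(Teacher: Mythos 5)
Your proof is correct and follows essentially the same route as the paper, which simply defers to \cite[Proposition 1]{helmvoloch} and \cite[Lemma 5.7]{MR2368954}: the content there is exactly the inverse-limit/compactness argument you describe, run over the finite nonempty sets of admissible twists in $\Tw_0(\pi)$ (and, for the lifted points, over the finite fibres above each $P_w$), with the converse direction being immediate. Your own caveat is the right one --- the greedy level-by-level choice must be replaced by the inverse limit of finite nonempty sets --- and your final paragraph resolves it as in the cited references.
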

A few words to justify the equivalence between the two definitions are needed. This is explained in \cite[Proposition 1]{helmvoloch} for curves, and it relies on results from \cite{MR2368954} (notably \cite[Lemma 5.7]{MR2368954}\footnote{It is actually better to refer to the corrected version of \cite{MR2368954} available on the author's website (\url{www.mathe2.uni-bayreuth.de/stoll/papers/Errata-FiniteDescent-ANT.pdf}).}). In \cite{MR2368954}, Stoll works with projective varieties and their rational points, but what he says still holds true for the integral points of non-projective varieties. Once such differences are taken into account, the proof works in the same way in our setting.


Clearly we have $\mathcal{Y}(\Oo_{F,S})\subset \mathcal{Y}^{f-cov}$ and so if $ \mathcal{Y}^{f-cov}$ is empty, then $\mathcal{Y}(\Oo_{F,S})$ has to be empty as well. What can be said when $ \mathcal{Y}^{f-cov}$ contains a point?
\begin{defi} If $\mathcal{Y}^{f-cov}(\Oo_{F,S})\neq \emptyset$ implies that $\mathcal{Y}(\Oo_{F,S})$ is non-empty, we say that the $S$-integral finite descent obstruction is the only obstruction for the existence of $S$-integral points.
\end{defi}
From now on we specialise to the case of Hilbert modular varieties (and their twists).
\subsection{Integral points on Hilbert modular varieties}\label{integralmodels}
Recall the notation from section \ref{modulispace}. Let $Y_K(\mathfrak{N})$ be the $n_K$-dimensional $\Q$-scheme described in Section \ref{modulispace} and let $N$ be the integer such that $\mathfrak{N}\cap \Z=(N)$. The set of twists of $\pi:  Y_K(\mathfrak{N}) \to Y_K(1)$ over a number field $F$ corresponds to the set of Galois representations $\rho : G_F \to \Gl_2( {\Oo_K}/\mathfrak{N})$ whose determinant is the cyclotomic character $\chi : G_F \to (\Z/N\Z)^{\times}$. Moreover a point $x \in Y_K(1)(F)$ lifts to a $F$-rational point of the twist of $Y_K(\mathfrak{N})$ corresponding to a representation $\rho$, if and only if $\rho$ describes the action of $G_F$ on the $\mathfrak{N}$-torsion of the underlying abelian variety $A_x$ (as an $\Oo_K$-module).

Using the moduli interpretation of Section \ref{modulispace}, we can construct a model of $Y_K(\mathfrak{N})$ over $\Z$, which is smooth over $\Z[1/b]$, for some natural number $b$, divisible by $N$. To be more precise $b$ depends on the level structure and the discriminant of $K$. Fixing such model, that we denote by $\mathcal{Y}_K(\mathfrak{N})$, we can talk about $\Oo_{F,S}$-points of $Y_K(\mathfrak{N})$, for any number field $F$ and set of places $S$ containing the archimedean places and the ones dividing $b$. Such $\Oo_{F,S}$-points then correspond to abelian varieties (with some extra structure), having good reduction outside $S$. Recall that $N$ is assumed to be bigger than $3$, since it is important to have a fine moduli space. For example the affine line is the moduli space of elliptic curves and has plenty of $\Z$-points, even though there are no elliptic curves defined over $\Z$.

We are ready to study the finite descent obstruction for the $\Oo_{L,S}$-points of $\mathcal{Y}_K(\mathfrak{N})$ and its twists, where $L$ is a totally real field (the fact that $L$ is totally real will be used only in the next section). Let 
\begin{displaymath}
 \rho : G_L\to \Gl_2(\Oo_K/\mathfrak{N}).
\end{displaymath}
be a representation whose determinant is the cyclotomic character. Assume that $S$ contains the places $w$ of ramification of $\rho$. Under this assumption, arguing as above, we can consider $\mathcal{Y}_\rho$ the $S$-integral model of the twist of $\mathcal{Y}_K(\mathfrak{N})$ corresponding to $\rho$. From now on we assume that $\mathcal{Y}_\rho(\Oo_{L,S})$ is non-empty. The next lemma relates a point $(P_w)_w \in\mathcal{Y}_\rho^{f-cov}$ to a system of Galois representations as considered in the previous section.

\begin{lemma}\label{corex2}
A point $(P_w)_w \in\mathcal{Y}_\rho^{f-cov}(\Oo_{L,S})$ corresponds to the following data:
\begin{itemize}
\item For each finite place $v$ of $K$ a representation $\rho_v : G_L \to \Gl_2(K_v)$;
\item For each finite place $w$ of $L$ such that $w\notin S$ an abelian variety $A_w/ L_w$ of dimension $n_K$, with good reduction and $\Oo_K$-multiplication;
\end{itemize}
satisfying:
\begin{itemize}
\item For every place $v$ in $K$ the action of $G_{L_w}$ on $T_v(A_w)$ is given by the restriction of $\rho_v$ to the decomposition group at $w$;
\item For all pairs $(v, a)$ such that $v^a$ divides $\mathfrak{N}$, the reductions of $\rho$ and $\rho_v$ modulo $v^a$ agree.
\end{itemize}
Moreover every such system satisfies the first four conditions of (\ref{conditions}).
\end{lemma}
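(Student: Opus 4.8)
\emph{Plan.} A point of $\mathcal{Y}_\rho^{f-cov}(\Oo_{L,S})$ is in particular a family $(P_w)_{w\notin S}\in\prod_{w\notin S}\mathcal{Y}_\rho(\Oo_{L_w})$ that lifts compatibly along every tower of geometric Galois covers. I would use only the tower of level-structure covers to extract the data, and then verify $(\mathcal{S}.1)$--$(\mathcal{S}.4)$ from the good reduction of the resulting local abelian varieties. By the moduli description of Section~\ref{modulispace} and the twisting recalled in Section~\ref{integralmodels}, a point $P_w\in\mathcal{Y}_\rho(\Oo_{L_w})$, $w\notin S$, is the datum of an abelian scheme $A_w/\Oo_{L_w}$ of relative dimension $n_K$ with an $\Oo_K$-action, a principal $\Oo_K$-polarisation, and a level structure on $A_w[\mathfrak{N}]$ under which $G_{L_w}$ acts through $\rho|_{D_w}$; in particular $A_w$ has good reduction. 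To obtain the global $\rho_v$, consider for each $n$ the geometric Galois cover $\mathcal{Y}_K(\mathfrak{N}v^n)\to\mathcal{Y}_\rho$: as in Section~\ref{integralmodels} its twists over $L$ dominating $\mathcal{Y}_\rho$ are classified by representations $G_L\to\Gl_2(\Oo_K/\mathfrak{N}v^n)$ with cyclotomic determinant lifting $\rho$. By Proposition~\ref{equivalence} the point $(P_w)_w$ produces, compatibly in $n$, such a twist $\pi_n'$ together with a lift $(P_w)_{\pi_n'}\in\prod_{w\notin S}\mathcal{X}'_n(\Oo_{L_w})$; passing to the limit in $n$ yields $\rho_v\colon G_L\to\Gl_2(\Oo_{K_v})\subset\Gl_2(K_v)$ with $\rho_v\equiv\rho\pmod{v^a}$ whenever $v^a\mid\mathfrak{N}$ (the whole tower lies over $\mathcal{Y}_\rho$), while the compatible local lifts endow $T_v(A_w)$ with a trivialisation intertwining the $G_{L_w}$-action with $\rho_v|_{D_w}$. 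This gives the two bulleted items and the mod-$v^a$ compatibility; conversely any datum of this shape is visibly of this form.

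\emph{Conditions $(\mathcal{S}.1)$--$(\mathcal{S}.3)$.} Take $S$ itself as the exceptional set. For $w\notin S$ with $w\nmid\ell$ (where $v\mid\ell$), $A_w$ has good reduction, so $\rho_v|_{D_w}$ is unramified by Néron--Ogg--Shafarevich, and the characteristic polynomial of $\rho_v(\operatorname{Frob}_{w})$ equals that of Frobenius on the Tate module of $\bar A_w$; since the $\Oo_K$-action on $\bar A_w$ commutes with Frobenius, this polynomial is $X^2-a_wX+\operatorname{Nm}_{L/\Q}(w)$ with $a_w=\operatorname{Tr}(\operatorname{Frob}_{w})\in\Oo_K$ independent of $v$ (cf.\ \cite[\S 3]{ribet}); hence $(\mathcal{S}.1)$. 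The principal $\Oo_K$-polarisation gives a perfect alternating $\Oo_{K_v}$-bilinear Weil pairing $T_v(A_w)\times T_v(A_w)\to\Oo_{K_v}(1)$, so $\det(\rho_v|_{D_w})=\chi_\ell|_{D_w}$ for all $w\notin S$, and $\det\rho_v=\chi_\ell$ by the Chebotarev density theorem; hence $(\mathcal{S}.2)$. If $\ell$ is divisible by no prime of $S$ and $w\mid\ell$, then $w\notin S$, so $A_w[v]$ extends to a finite flat $\Oo_{L_w}$-group scheme and $\bar{\rho}_v|_{D_w}$ is finite flat; hence $(\mathcal{S}.3)$.

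\emph{Condition $(\mathcal{S}.4)$ --- the main obstacle.} First, $\rho_v$ is irreducible over $K_v$ for every $v$: at each $w\mid\ell$ with $w\notin S$ the representation $\rho_v|_{D_w}$ is crystalline with Hodge--Tate weights in $\{0,1\}$, so a global splitting $\rho_v^{\mathrm{ss}}\cong\psi_{1,v}\oplus\psi_{2,v}$ with $\psi_{1,v}\psi_{2,v}=\chi_\ell$ would consist of characters that are de Rham above $\ell$ and unramified outside $S_v$, hence of algebraic Hecke characters of $L$ whose infinity types would be forced, by the Weil/Hasse bound $|\sigma(a_w)|\le 2\sqrt{\operatorname{Nm}(w)}$ (all real embeddings $\sigma$ of $K$, all $w\notin S$), to be ``$\tfrac12$'' at every archimedean place --- impossible. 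Suppose now that $\bar{\rho}_v$ is absolutely reducible for infinitely many $v$, and write $\bar{\rho}_v^{\mathrm{ss}}\cong\bar{\psi}_{1,v}\oplus\bar{\psi}_{2,v}$. Bounding the wild inertia exactly as in the proof of Proposition~\ref{propmodform} and applying Serre's estimate (Proposition~\ref{conductorprop}), the conductor of $\bar{\rho}_v$ away from $\ell$ divides a fixed ideal, while finite-flatness forces each $\bar{\psi}_{i,v}$ to be tamely ramified above $\ell$ of cyclotomic exponent at most one; hence a fixed power of $\bar{\psi}_{1,v}$ is unramified above $\ell$ of bounded conductor and factors through a fixed finite ray class group of $L$. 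Evaluating at a single auxiliary Frobenius $\operatorname{Frob}_{w_0}$ ($w_0\notin S$) then exhibits a root of the fixed polynomial $X^2-a_{w_0}X+\operatorname{Nm}(w_0)$ --- which by the Weil bound has absolute value $\sqrt{\operatorname{Nm}(w_0)}>1$ --- as an element of bounded order modulo infinitely many $v$, contradicting that it is not a root of unity; thus $(\mathcal{S}.4)$ holds. I expect this last step --- turning the conductor bound into a genuine contradiction with the Weil bound, and in particular controlling the constituents at the primes above $\ell$ when $L$ has several of them --- to be the technical heart of the argument, the remainder being bookkeeping with the moduli interpretation and standard properties of abelian schemes with good reduction.
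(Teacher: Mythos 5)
Your proposal follows the paper's proof in its essentials: the data are extracted exactly as in the paper, by applying Proposition \ref{equivalence} to (twists of) the tower of level covers and reading off, from the moduli interpretation of Section \ref{modulispace}, abelian varieties $A_w/L_w$ with good reduction and prescribed torsion; conditions $(\mathcal{S}.1)$--$(\mathcal{S}.3)$ are verified the same way (N\'eron--Ogg--Shafarevich, the Weil pairing from the principal $\Oo_K$-polarisation, finite flatness of $A_w[v]$); and for $(\mathcal{S}.4)$ the strategy is likewise the paper's, namely to bound the conductor of $\bar{\rho}_v$ by a fixed ideal $\mathfrak{C}$ as in Proposition \ref{propmodform} and play residual reducibility off against the Weil bound.

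There is, however, one step in your $(\mathcal{S}.4)$ argument that fails as written: the claim that, because each constituent is tamely ramified above $\ell$ with cyclotomic exponent at most one, ``a fixed power of $\bar{\psi}_{1,v}$ is unramified above $\ell$.'' If the exponent of $\bar{\psi}_{1,v}$ at some $w\mid\ell$ is one, then $\bar{\psi}_{1,v}(I_w)$ has order comparable to $\ell-1$, which is unbounded as $v$ varies; no fixed power becomes unramified, so $\bar{\psi}_{1,v}(\operatorname{Frob}_{w_0})$ need not have bounded order and the resultant argument does not get off the ground. The paper's route through this point is more direct: since $\bar{\rho}_v|_{G_{L_w}}\simeq A_w[v]$ with $A_w$ of good reduction, reducibility forces $A_w$ to be ordinary at $v$, so the semisimplification is $\phi\oplus\chi_\ell\phi^{-1}$ with $\phi$ unramified at the places above $\ell$ and of conductor dividing the fixed ideal $\mathfrak{C}$; choosing once and for all a place $w_0\equiv 1\bmod \mathfrak{C}$ gives $a_{w_0}\equiv 1+\chi_\ell(\operatorname{Frob}_{w_0})\bmod v$ for infinitely many $v$, hence the exact identity $a_{w_0}=1+\operatorname{Nm}(w_0)$, contradicting $|a_{w_0}|\le 2\sqrt{\operatorname{Nm}(w_0)}$. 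Your closing worry about several primes above $\ell$ is the right one --- one must take $\phi$ to be the unramified constituent at every $w\mid\ell$ simultaneously, as the paper implicitly does --- but once $\phi$ is so normalised no ``fixed power'' device or root-of-unity argument is needed. The preliminary observation that $\rho_v$ itself is irreducible is consistent with, but not required by, the paper's argument.
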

\begin{proof}
We first check, using Lemma \ref{equivalence}, that an unobstructed point corresponds to a system of Galois representations as described above, and then we show that every such system enjoys the desired properties. 

Thanks to Proposition \ref{equivalence}, we can fix a compatible system of lifts of $(P_w)_w$ on $\mathcal{Y}_\rho^{f-cov}(\Oo_{L,S})$. In particular, for each $\mathfrak{M}$ divisible by $\mathfrak{N}$, we obtain a twist $\mathcal{Y}_K(\mathfrak{M})'$ of $\mathcal{Y}_K(\mathfrak{M})$ and a compatible family of points $(P_w)_\mathfrak{M}$ of $\mathcal{Y}_K(\mathfrak{M})'$ lifting $(P_w)_w$. We remark here that the latter compatible family of points depends on $(P_w)_w$. Indeed, a priori, we can not simply lift $\rho$ to mod $\mathfrak{M}$ coefficients.

By the interpretation of $\mathcal{Y}_K (\mathfrak{M})'$ as moduli space of abelian varieties discussed above, the point $(P_w)_\mathfrak{M}$ corresponds to an abelian variety $A_w/ L_w$ of dimension $n_K$, with good reduction and $\Oo_K$-multiplication and prescribed $\mathfrak{M}$-torsion. The other conditions are easily checked as at the end of proof of \cite[Theorem 2]{helmvoloch}.

The fact that the action of $G_{L_w}$ on $T_v(A_w)$ is given by the restriction of $\rho_v$ to the decomposition group at $w$ ensures that ($\mathcal{S}.1$) and ($\mathcal{S}.2$) are satisfied. Moreover, since $A_w$ has good reduction, then $A_w[v]\simeq \bar{\rho}_v$ is a finite flat group scheme over $O_{K_w}$ for all $w\mid \ell$ if $v\mid \ell$; this implies that ($\mathcal{S}.3$) also holds. 

Finally, we need to show that ($\mathcal{S}.4$) is satisfied. With the three conditions above one can show, as in the proof of Proposition \ref{propmodform}, that the conductor of $\bar{\rho}_v$ divides a fixed ideal $\mathfrak{C}$ of $L$. If $w$ is such that $A_w/L_w$ is supersingular at $v$, then $A_w[v]\simeq \bar{\rho}_v$ is absolutely irreducible. If there existed infinitely many $v$ such that $\bar{\rho}_v$ is absolutely reducible, we could then write the semisemplification of $\bar{\rho}_v$ as direct sum of $\phi$ and $\chi_{\ell}\phi^{-1}$, for some character $\phi$. Since $\bar{\rho}_v\simeq A_w[v]$ for $w\mid p$, we then have that $A_w$ is ordinary and hence $\phi$ is unramified at $w$. We also know that the conductor of $\phi$ divides $\mathfrak{C}$, hence if $w\equiv 1 \mod \mathfrak{C}$ we have 
\begin{displaymath}
a_w(A_w):=\Tr(\operatorname{Frob}_w, T_v(A_w)) \equiv \chi_{\ell}(\operatorname{Frob}_w)+1 \mod v.
\end{displaymath}
Since $\chi_{\ell}(\operatorname{Frob}_w)=p^{[L_w:\Q_p]}$, we showed that if we had infinitely many $v$ such that $\bar{\rho}_v$ is absolutely reducible, we would find $a_w(A_w)=p^{[L_w:\Q_p]}+1$. Since the Weil bound says that $|a_w|\leq 2\sqrt{p^{[L_w:\Q_p]}}$, we reached a contradiction. 
\end{proof}

\begin{rmk}\label{remkfalting}
As discussed above, for any number field $F$, we have a map from $Y_K(F)$ to systems of Galois representations satisfying ($\mathcal{S}.1)-(\mathcal{S}.4$). Thanks to Faltings \cite[Satz 6]{MR718935}, this map has finite fibres. Indeed if two points give rise to the same system, the two corresponding abelian varieties have the same locus of bad reduction, that we denote by $S$. It follows from the Shafarevich conjecture that Shimura varieties of abelian type have only finitely $\Oo_{F,S}$-points. For more details we refer to \cite[Theorem 3.2(A)]{MR2109988}.
\end{rmk}
We are now ready to prove the main theorem about descent obstruction for Hilbert modular varieties. 
\subsection{Proof of Theorem \ref{maincor}}\label{proofmaincor}
We do not treat the cases $n_L=1$ and $n_L>1$ separately, but, as in the proof of Theorem \ref{main}, we emphasize that we don't need any conjectural statement in the case $n_L=1$, since we have Shimura's unconditional result, Theorem \ref{eichlershimurathm}.

\begin{proof}[Proof of Theorem \ref{maincor}]
Thanks to Lemma \ref{corex2}, a point in $\mathcal{Y}_\rho^{f-cov} (\Oo_{L,S})$, which is assumed to be non-empty, gives rise to a compatible system of representations of $G_L$, denoted by $\{\rho_v\}_v$. Let $E$ be the the subfield of $K$  generated by $\tr (\rho_v(\operatorname{Frob}_w))$ for all $w$. If $E=K$, Corollary \ref{corfordescent} produces an $\Oo_{L,S}$-abelian variety $A$ with $\Oo_K$-multiplication, such that $G_L$ acts on $A[\mathfrak{N}]$ via $\rho$. To conclude the proof we just need to see how $A$ corresponds to a point $P\in \mathcal{Y}_{\rho}(\Oo_{L,S})$. The only issue that is not clear from the quoted corollary is whether $A$ is principally $\Oo_K$-polarised, but this follows from the discussion in section \ref{polar}.

If $E$ is strictly contained in $K$, i.e. condition (\ref{allK}) is not satisfied, we consider $S_E$, the Hilbert modular variety associated to $E$ and of level $\mathfrak{N} \cap \Oo_{E}$. For the same reason as above, the system $\{\rho_v\}$ corresponds to a $\Oo_{L,S}$-point $P$ of the twist by $\rho$ of $S_E$. The embedding $\Res_{E/\Q}\Gl_2\hookrightarrow \Res_{K/\Q}\Gl_2$ induces a map of Shimura varieties
\begin{displaymath}
r :S_E \to Y_K(\mathfrak{N}),
\end{displaymath}
and therefore on their twists by $\rho$. Via $r$, we can regard $P$ as an $\Oo_{L,S}$-point of $\mathcal{Y}_\rho$, which completes the proof of the theorem. The only difference is that the abelian variety constructed in this case is not primitive. This concludes the proof of Theorem \ref{maincor}.
\end{proof}


\bibliographystyle{abbrv}
\bibliography{biblio}

\Addresses

\end{document}